\newtheorem{thm}{Theorem}[section]
\newtheorem{lem}[thm]{Lemma}
\theoremstyle{definition}
\newtheorem{defn}[thm]{Definition}
\theoremstyle{remark}
\newtheorem{rem}[thm]{Remark}
\numberwithin{equation}{section}
\newcommand{\fai}{\pmb{\phi}}
\newcommand{\pai}{\pmb{\pi}}
\newcommand{\q}{\textbf{\emph{q}}}
\newcommand{\p}{\textbf{\emph{p}}}
\newcommand{\x}{\textbf{\emph{x}}}
\newcommand{\bm}[1]{\emph{\textbf{#1}}}
\newcommand{\n}{\textbf{n}}
\newcommand{\R}{\mathcal{R}}
\newcommand{\RL}{Riemann-Liouville }
\newcommand{\cinf}[1]{C_0^{\infty}(#1)}
\newcommand{\norm}[1]{\left\Vert#1\right\Vert}
\newcommand{\flux}[1]{\widehat{{#1}}}
\newcommand{\Rmnum}[1]{\mathcal{\expandafter\@slowromancap\romannumeral #1@}}
\begin{document}

\pagenumbering{arabic}

\begin{frontmatter}
\title{{\bf  Nodal discontinuous Galerkin methods for fractional diffusion equations on 2D domain with triangular meshes}}
\author{Liangliang Qiu$^1$, Weihua Deng$^{1,*}$, Jan S. Hesthaven$^2$}
\cortext[cor2]{Corresponding author. E-mail: dengwh@lzu.edu.cn.}
\address{$^1$  School of Mathematics and Statistics, Gansu Key Laboratory of Applied Mathematics and Complex Systems, Lanzhou University, Lanzhou 730000, P.R. China}

\address{$^2$ EPFL-SB-MATHICSE-MCSS, \'{E}cole Polytechnique F\'{e}d\'{e}rale de Lausanne, CH-1015 Lausanne, Switzerland}


\begin{abstract}
This paper, as the sequel to  previous work, develops numerical schemes for fractional diffusion equations on a two-dimensional finite domain with triangular meshes.  We adopt the nodal discontinuous Galerkin methods for the full spatial discretization by the use of high-order nodal basis, employing multivariate Lagrange polynomials defined on the triangles. Stability analysis and error estimates are provided, which shows that if polynomials of degree $N$ are used, the methods are (N+1)-th order accurate for general triangulations. Finally, the performed numerical experiments confirm the optimal order of convergence.

\noindent {\bf Keywords:}
2D fractional diffusion equation; triangular meshes; nodal discontinuous Galerkin methods.
\end{abstract}
\end{frontmatter}

\section{Introduction}
Historically, fractional calculus emerged nearly in the same time as classical calculus as a natural extension of classic calculus. However, its application for problems like fractional partial differential equations (FPDEs) are less mature than that associated with classic calculus. Only during the last few decade has  fractional calculus seen a broader application as a tool to describe a wide range of non-classical phenomena in the applied science and engineering, for example, the fractional Fokker-Planck equations for anomalous diffusion problems, continuous time random walk models with power law waiting time and/or jump length distribution (\cite{barkai, metzler}), and  the subdiffusion and superdiffusion process.

With the increasing utilization of fractional calculus it is necessary to develop appropriate and robust numerical methods to solve FPDEs for practical application. A fundamental difference between problems in classic calculus and fractional calculus lies in the non-local nature of the factional operators and the dependance of history, causing the essential difficulties and challenges for numerical approximation.

In recent years, however, successful work has emerged to deal with discretizing fractional models by adapting traditional numerical schemes, including finite difference methods, finite element methods, and spectral methods. Meerschaert and Tadjeran in [12] firstly proposed a stable difference method -- the shifted Gr\"{u}nwald-Letnikov formula -- to approximate fractional advection-dispersion flow equations. Recently, Tian et al. \cite{tian12}  put forward a higher order accurate numerical solution
method for the space fractional diffusion equation, referred to as the weighted and shifted Gr\"unwald difference operators. In \cite{xu1}, Li and Xu considered a space-time spectral method for solving the time fractional diffusion equation and Deng \cite{deng1} developed  finite element methods for discretizing the space and time fractional Fokker-Planck equation. More recently, Xu and Hesthaven \cite{xu14} discussed stable multi-domain spectral penalty methods for FPDEs and Zayernouri and Karniadakis \cite{Zayernouri} analyzed fractional Sturm-Liouville eigen-problems.

As an alternative, discontinuous Galerkin methods have also emerged. In 2010, Deng and Hesthaven \cite{deng2} proposed a local discontinuous Galerkin method for the fractional diffusion equation, and offered  stability analysis and error estimates, confirming that the schemes should exhibit  optional order of convergence for the superdiffusion case. Almost in the same time, Ji and Tang \cite{xia} presented  a purely qualitative study of the solution of spatial Caputo fractional problems in one and two dimensions using a high-order Runge-kutta discontinuous Galerkin methods, but did not offer theoretical results.

The main advantages of DG methods include geometric flexibility and the support of locally adapted resolution as well as excellent parallel efficiency. In \cite{xia}, the authors adopted the rectangular meshes to deal with the two-dimensional cases. This paper, as a successor to previous work \cite{deng2},  discusses how to approximate fractional diffusion equations with genuinely unstructured grids beyond one dimension and offer a theoretical analysis for the high dimensional case. To
focus on how to overcome the difficulties of developing LDG for fractional problems with unstructured meshes, we consider left \RL fractional equations and just the numerical experiments are provided for the right \RL fractional equations; in fact, the discussions for the right (or both left and right) \RL fractional equations are similar to the left ones.

The paper is organized as follows. In Section 2, we review the needed definitions of fractional operators and the fractional functional setting. In the following section, we propose our numerical schemes and show the detailed algorithm in computation. Section 4 gives the corresponding stability analysis and error estimates. The numerical results are provided in Section 5 and a few concluding remarks are given in the last section.

\section{Preliminaries}
In this section, we introduce some preliminary  definitions of fractional derivatives and associated functional setting for
the subsequent numerical schemes and theoretical analysis.

First we recall some definitions of the fractional derivatives and integrals listed as follows:
\begin{itemize}
  \item left \RL fractional derivative:
    $$ _aD_x^{\alpha}u(x) = \frac{1}{\Gamma(n-\alpha)} \frac{d^n}{dx^n} \int_{a}^{x} (x- \xi)^{n - \alpha -1}u(\xi) d\xi$$
  \item right \RL fractional derivative:
    $$ _xD_b^{\alpha}u(x) = \frac{(-1)^{n}}{\Gamma(n-\alpha)} \frac{d^n}{dx^n} \int_{x}^{b} (\xi - n)^{n - \alpha -1}u(\xi) d\xi$$
  \item left fractional integral:
    $$ _aD_x^{-\alpha}u(x) = \frac{1}{\Gamma(\alpha)}  \int_{a}^{x} {(x-\xi)^{\alpha-1}}{u(\xi)} d\xi$$
  \item right fractional integral:
      $$ _xD_b^{-\alpha}u(x) = \frac{1}{\Gamma(\alpha)}  \int_{x}^{b} {(\xi-x)^{\alpha-1}}{u(\xi)} d\xi,$$
\end{itemize}
where $\Gamma(\cdot)$ denotes the Gamma function, $\alpha \in [n-1,n)$, $x\in(a,b)$, a and b can be $-\infty$ and $+\infty$ respectively.

By simple linear transformations, the fractional integrals have the equivalent forms, i.e.,
\begin{equation}
\begin{split}
_aD_x^{-\alpha} u(x) = \frac{1}{\Gamma(\alpha)} \left( \frac{x-a}{2} \right)^{\alpha} \int_{-1}^{1} (1-\eta)^{\alpha-1}u\left(\frac{x+a}{2} + \frac{x-a}{2}\eta\right) d\eta,\\
_xD_b^{-\alpha} u(x) = \frac{1}{\Gamma(\alpha)} \left( \frac{b-x}{2} \right)^{\alpha} \int_{-1}^{1} (1+\eta)^{\alpha-1}u\left(\frac{b+x}{2} + \frac{b-x}{2}\eta\right) d\eta.
\end{split}
\label{eq2:2}
\end{equation}
On the basis of (\ref{eq2:2}), we use the Gauss-Jacobi quadrature with weight functions $(1-\eta)^{\alpha-1}$ and  $(1+\eta)^{\alpha-1}$ to solve the weakly singular fractional integrals in numerical computation. For the following Lemmas \ref{lemma2.1}, \ref{lem2.3}, Definitions \ref{defn2.2}, \ref{defn2.4}, and Theorem \ref{thm2.5}, we refer to \cite{Ervin:2006, deng1, deng2}.
\begin{lem}
\label{lemma2.1}
The left and right fractional integral operators are adjoint in the sense of the $L^2(a,b)$ inner product, i.e.,
\begin{equation}
(_aD_x^{-\alpha}u,v)_{L^2(a,b)} = (u,\, _{x}D_b^{-\alpha}v)_{L^2(a,b)}  \quad \forall \alpha>0, a<b.
\end{equation}
\end{lem}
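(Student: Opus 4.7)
The plan is to prove the adjoint identity by a direct Fubini-type interchange of the order of integration, which is the classical way to establish the duality between the left and right Riemann--Liouville fractional integrals.

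First I would unfold both $L^2$ inner products using the definitions. The left-hand side becomes
\begin{equation*}
(_aD_x^{-\alpha}u,v)_{L^2(a,b)} = \frac{1}{\Gamma(\alpha)}\int_a^b v(x)\left[\int_a^x (x-\xi)^{\alpha-1} u(\xi)\,d\xi\right]dx,
\end{equation*}
which is a double integral of $u(\xi) v(x) (x-\xi)^{\alpha-1}/\Gamma(\alpha)$ over the triangular region $T = \{(x,\xi): a \le \xi \le x \le b\}$.

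Next I would apply Fubini's theorem to interchange the order of integration, rewriting $T$ as $\{(x,\xi): a \le \xi \le b,\ \xi \le x \le b\}$. This yields
\begin{equation*}
\frac{1}{\Gamma(\alpha)}\int_a^b u(\xi)\left[\int_\xi^b (x-\xi)^{\alpha-1} v(x)\,dx\right]d\xi,
\end{equation*}
and the inner bracket is precisely $\Gamma(\alpha)\cdot\,_\xi D_b^{-\alpha} v(\xi)$ by the definition of the right fractional integral. Renaming $\xi$ back to $x$ then identifies the expression with $(u,\,_x D_b^{-\alpha} v)_{L^2(a,b)}$, completing the identity.

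The only real obstacle is justifying the Fubini step, since the kernel $(x-\xi)^{\alpha-1}$ is weakly singular on the diagonal when $\alpha \in (0,1)$. I would verify absolute integrability by checking that the iterated integral of $|u(\xi)||v(x)|(x-\xi)^{\alpha-1}$ over $T$ is finite for $u,v \in L^2(a,b)$: using Cauchy--Schwarz in the inner integral together with the bound $\int_a^x (x-\xi)^{\alpha-1}d\xi = (x-a)^\alpha/\alpha < (b-a)^\alpha/\alpha$ shows that $_aD_x^{-\alpha}|u| \in L^2(a,b)$, so the product $|v|\cdot{}_aD_x^{-\alpha}|u|$ is integrable and Fubini applies. (Alternatively, one may establish the identity first for smooth compactly supported $u,v$ by an elementary swap and then extend by density using the continuity of $_aD_x^{-\alpha}: L^2(a,b) \to L^2(a,b)$.) Once this technicality is handled the rest of the argument is a one-line computation.
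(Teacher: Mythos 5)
Your proof is correct: the Fubini interchange over the triangle $\{a\le\xi\le x\le b\}$, with absolute integrability secured by the Cauchy--Schwarz/$L^1$-kernel bound showing $_aD_x^{-\alpha}$ is bounded on $L^2(a,b)$, is exactly the standard argument. The paper itself does not prove the lemma but defers to \cite{Ervin:2006,deng1,deng2}, where the proof is this same interchange-of-integration computation, so your proposal matches the intended approach (and your explicit justification of the Fubini step is a welcome addition).
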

In order to carry out the analysis, we need to introduce the fractional integral space here.
\begin{defn}\label{defn2.2}
Let $\alpha >0$. Define the norm
\begin{equation}
    \norm{u}_{H^{-\alpha}(\R)} :=  \norm{ |\omega|^{-\alpha} \widehat{u}}_{L^2(\R)},
\end{equation}
where $\widehat{u}(\omega)$ is the Fourier transform of $u(x)$. Let $H^{-\alpha}(\R)$ denote the closure of $\cinf{\R}$ with respect to $\norm{\cdot}_{H^{-\alpha}(\R)}$.
\end{defn}
%
%

\begin{lem} \label{lem2.3}
\begin{equation}
    (_{-\infty}D_x^{-\alpha}u,\smallskip _xD_{\infty}^{-\alpha} u) = cos(\alpha \pi)\norm{_{-\infty}D_x^{-\alpha}u}^2_{L^2(\R)} = cos(\alpha \pi) \norm{u}^2_{H^{-\alpha}(\R)}.
\end{equation}
\end{lem}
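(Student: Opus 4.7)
The plan is to work on the Fourier side, using Plancherel's theorem and the known Fourier symbols of the left and right Riemann--Liouville fractional integrals. Throughout I would first establish the identity for $u \in \cinf{\R}$ (real-valued), and then pass to $H^{-\alpha}(\R)$ by density; the two spaces are by definition compatible with this kind of closure argument.

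The first step is to compute the Fourier symbols. Since $_{-\infty}D_x^{-\alpha}u$ is the convolution of $u$ with $\frac{1}{\Gamma(\alpha)} x_+^{\alpha-1}$, a standard computation gives
\begin{equation*}
\widehat{_{-\infty}D_x^{-\alpha}u}(\omega) = (i\omega)^{-\alpha}\,\widehat{u}(\omega),\qquad
\widehat{_xD_{\infty}^{-\alpha}u}(\omega) = (-i\omega)^{-\alpha}\,\widehat{u}(\omega),
\end{equation*}
where the branches are chosen so that
\begin{equation*}
(i\omega)^{-\alpha} = |\omega|^{-\alpha}\,e^{-i\alpha\pi\,\mathrm{sgn}(\omega)/2},\qquad
(-i\omega)^{-\alpha} = |\omega|^{-\alpha}\,e^{+i\alpha\pi\,\mathrm{sgn}(\omega)/2}.
\end{equation*}
Multiplying $\overline{(i\omega)^{-\alpha}}$ by $(-i\omega)^{-\alpha}$ produces
\begin{equation*}
|\omega|^{-2\alpha}\,e^{i\alpha\pi\,\mathrm{sgn}(\omega)}.
\end{equation*}

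Next, applying Parseval's identity to the $L^2(\R)$ inner product yields
\begin{equation*}
(_{-\infty}D_x^{-\alpha}u,\,_xD_{\infty}^{-\alpha}u)_{L^2(\R)}
= \int_{\R} |\omega|^{-2\alpha}\,e^{i\alpha\pi\,\mathrm{sgn}(\omega)}\,|\widehat{u}(\omega)|^2\,d\omega.
\end{equation*}
Because $u$ is real, $|\widehat{u}(\omega)|^2$ is even in $\omega$; the factor $|\omega|^{-2\alpha}$ is even as well, while $\sin(\alpha\pi\,\mathrm{sgn}(\omega))$ is odd. The imaginary part of the integrand therefore integrates to zero, and the cosine part is constant, so the right-hand side collapses to
\begin{equation*}
\cos(\alpha\pi)\int_{\R} |\omega|^{-2\alpha}\,|\widehat{u}(\omega)|^2\,d\omega.
\end{equation*}

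Finally I would identify this surviving integral in two equivalent ways: first, Plancherel applied to $\widehat{_{-\infty}D_x^{-\alpha}u}(\omega) = (i\omega)^{-\alpha}\widehat{u}(\omega)$ gives $\|_{-\infty}D_x^{-\alpha}u\|_{L^2(\R)}^2 = \int |\omega|^{-2\alpha}|\widehat{u}|^2 d\omega$; second, the same expression is exactly $\|u\|_{H^{-\alpha}(\R)}^2$ by Definition \ref{defn2.2}. Chaining these identifications establishes the double equality in the lemma.

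The main subtlety I anticipate is the careful treatment of the complex powers $(\pm i\omega)^{-\alpha}$: one must fix branch cuts consistently so that the two Fourier multipliers really do multiply to the claimed real-phase expression, and one must verify the Fourier-symbol formula on a dense class (e.g.\ $\cinf{\R}$) where the weakly singular convolutions converge and the density argument for $H^{-\alpha}(\R)$ applies. Once the symbols are pinned down correctly, the rest is Plancherel plus an odd/even symmetry observation.
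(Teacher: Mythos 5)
Your Fourier-multiplier/Plancherel argument is correct and is essentially the paper's own route: the paper states Lemma \ref{lem2.3} without proof and refers to \cite{Ervin:2006, deng1, deng2}, where exactly this computation — the symbols $(\pm i\omega)^{-\alpha}$ with the branch choice you give, Parseval, and the even/odd splitting that leaves the constant factor $\cos(\alpha\pi)$ — is carried out, followed by the same density extension from $\cinf{\R}$. The only point to pin down is a unitary Fourier convention so that Plancherel holds without a $2\pi$ factor, which is needed for the second equality with $\norm{u}^2_{H^{-\alpha}(\R)}$ as defined in Definition \ref{defn2.2}.
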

\noindent Let us now restrict attention to the case in which $ supp(v) \subset \Omega = (a,b)$. Then $_{-\infty}D_x^{-\alpha }u = {_a}D_x^{-\alpha}$,
and $_{x}D_{\infty}^{-\alpha} = {_x}D_b^{-\alpha}$. Straightforward extension of the above yields.
\begin{defn} \label{defn2.4}
Define the space $H_0^{-\alpha}(\Omega)$ as the closure of $\cinf{\Omega}$  with respect to $\norm{\cdot}_{H^{-\alpha}(\R)}$.
\end{defn}

The following theorem gives the inclusion relation between the fractional integral spaces with different $\alpha$.
\begin{thm}\label{thm2.5}
If $-\alpha_2 < -\alpha_1 < 0$, then $H_0^{-\alpha_1}(\Omega)$  is embedded into
$H_0^{-\alpha_2}(\Omega)$ and $L^2(\Omega)$ is embedded into both of them.
\end{thm}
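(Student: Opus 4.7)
The plan is to pass to the Fourier side via Definition~\ref{defn2.2} and split the resulting frequency integral into a low- and high-frequency part. For any $u \in \cinf{\Omega}$, extended by zero to $\R$, Plancherel gives $\norm{u}_{L^2(\Omega)} = \norm{\widehat u}_{L^2(\R)}$, while the compact support of $u$ yields the pointwise estimate $\abs{\widehat u(\omega)} \le C_{\Omega}\norm{u}_{L^2(\Omega)}$ uniformly in $\omega$ by Cauchy--Schwarz, with $C_\Omega$ depending only on $|\Omega|$. These are the two analytic inputs I need, together with the obvious comparison $\abs{\omega}^{-2\alpha_2}\le\abs{\omega}^{-2\alpha_1}$ on $\abs{\omega}\ge 1$.

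Next I establish $L^2(\Omega)\hookrightarrow H_0^{-\alpha_j}(\Omega)$ for $j=1,2$ by splitting $\int_\R \abs{\omega}^{-2\alpha_j}\abs{\widehat u(\omega)}^2\,d\omega$ at $\abs{\omega}=1$. On $\abs{\omega}\ge 1$ the weight is bounded by $1$, so Plancherel controls this piece by $\norm{u}^2_{L^2(\Omega)}$; on $\abs{\omega}<1$ the uniform pointwise bound on $\widehat u$ factors out, leaving the integral $\int_{\abs{\omega}<1}\abs{\omega}^{-2\alpha_j}\,d\omega$, which is finite in the admissible parameter range. Density then transfers this inequality from $\cinf{\Omega}$ to the closure $H_0^{-\alpha_j}(\Omega)$. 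For the main embedding $H_0^{-\alpha_1}(\Omega)\hookrightarrow H_0^{-\alpha_2}(\Omega)$, the same split applies: on $\abs{\omega}\ge 1$ the comparison $\abs{\omega}^{-2\alpha_2}\le\abs{\omega}^{-2\alpha_1}$ gives direct domination by $\norm{u}^2_{H^{-\alpha_1}(\R)}$, and on $\abs{\omega}<1$ the uniform bound on $\widehat u$ produces a term proportional to $\norm{u}^2_{L^2(\Omega)}$, which in turn is controlled by $\norm{u}^2_{H^{-\alpha_1}(\R)}$ via the just-established $L^2\hookrightarrow H_0^{-\alpha_1}$ embedding. A routine density argument closes both chains.

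The main obstacle is the low-frequency contribution, where the weight $\abs{\omega}^{-2\alpha}$ is singular at the origin. I am rescued by two facts: $\widehat u$ is analytic for compactly supported $u$, hence uniformly bounded on a neighbourhood of $0$, so only the integrability of $\abs{\omega}^{-2\alpha}$ on a bounded frequency window matters; and this window is bounded away from infinity, so the heavy tail of the weight never enters. Should one wish to push $\alpha$ beyond the range where the raw integral $\int_{\abs{\omega}<1}\abs{\omega}^{-2\alpha}\,d\omega$ converges, a finer local expansion $\widehat u(\omega)=\widehat u(0)+O(\omega)$ (valid because $\widehat u$ is entire) would be required. All remaining manipulations---Plancherel, Cauchy--Schwarz over a bounded support, the frequency split, and passage to the closure---are routine.
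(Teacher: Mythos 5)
Your low-frequency treatment of the main embedding $H_0^{-\alpha_1}(\Omega)\hookrightarrow H_0^{-\alpha_2}(\Omega)$ contains a genuine gap: you bound the piece $\int_{\abs{\omega}<1}\abs{\omega}^{-2\alpha_2}\abs{\widehat u}^2\,d\omega$ by a multiple of $\norm{u}^2_{L^2(\Omega)}$ and then claim this is ``controlled by $\norm{u}^2_{H^{-\alpha_1}(\R)}$ via the just-established $L^2\hookrightarrow H_0^{-\alpha_1}$ embedding.'' That embedding gives $\norm{u}_{H^{-\alpha_1}(\R)}\leqslant C\norm{u}_{L^2(\Omega)}$, i.e.\ exactly the opposite inequality to the one you invoke, and the reverse inequality is false: for a fixed bump $\chi\in\cinf{\Omega}$ the functions $u_n(x)=\chi(x)\cos(nx)$ satisfy $\norm{u_n}_{L^2(\Omega)}\geqslant c>0$ while $\norm{u_n}_{H^{-\alpha_1}(\R)}=O(n^{-\alpha_1})\to 0$, so no constant can close your chain. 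A correct repair on the Fourier side is to estimate $\widehat u(\omega)$ for $\abs{\omega}\leqslant 1$ by duality rather than by Cauchy--Schwarz in $x$: writing $\widehat u(\omega)=\int u(x)\chi(x)e^{-i\omega x}dx$ with $\chi\equiv 1$ on $\Omega$ and pairing $\abs{\eta}^{-\alpha_1}\widehat u$ against $\abs{\eta}^{\alpha_1}\widehat{\chi e^{-i\omega\cdot}}$ gives $\abs{\widehat u(\omega)}\leqslant C_{\Omega,\alpha_1}\norm{u}_{H^{-\alpha_1}(\R)}$ uniformly in $\abs{\omega}\leqslant 1$, after which your splitting works. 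Note also that the paper does not prove this theorem at all; it quotes it from the cited references, where the embedding is obtained instead from the composition law ${_aD_x^{-\alpha_2}}={_aD_x^{-(\alpha_2-\alpha_1)}}\,{_aD_x^{-\alpha_1}}$ together with the $L^2(a,b)$-boundedness of the fractional integral of order $\alpha_2-\alpha_1$ on a bounded interval (Young's inequality), an argument that avoids the low-frequency issue altogether.

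Separately, your proof of $L^2(\Omega)\hookrightarrow H_0^{-\alpha}(\Omega)$ requires $\int_{\abs{\omega}<1}\abs{\omega}^{-2\alpha}\,d\omega<\infty$, i.e.\ $\alpha<1/2$ in one dimension, whereas the theorem is stated for arbitrary $0<\alpha_1<\alpha_2$. This restriction does cover the range actually used later in the paper (the orders there are $1-\alpha/2,\,1-\beta/2\in(0,1/2)$), but you should state it explicitly, because your proposed escape route via analyticity of $\widehat u$ cannot work: generically $\widehat u(0)=\int_\Omega u\neq 0$, so for $\alpha\geqslant 1/2$ the integral $\int_{\abs{\omega}<1}\abs{\omega}^{-2\alpha}\abs{\widehat u}^2\,d\omega$ genuinely diverges and the whole-line Fourier norm of a generic $\cinf{\Omega}$ function is infinite; no Taylor expansion of $\widehat u$ at the origin repairs that. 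Beyond the admissible range the statement must be read with the domain-based fractional-integral norms of the cited references, not with the norm of Definition~\ref{defn2.2} over $\R$.
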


\subsection{Notations for DG methods}
We consider problems posed on the physical domain $\Omega$ with boundary $\partial\Omega$ and assume that this domain is well approximated by
the computational domain $\Omega_h$.  Generally, we denote $\Omega_h$  as $\Omega$ when no misunderstanding is possible. This is a space filling triangulation composed of a collection of K geometry-conforming nonoverlapping elements, $D^k$, i.e.,
$$\Omega \simeq \Omega_h = \bigcup_{k=1}^{K}D^k,$$
and $\Gamma$ denotes the union of the boundaries of the elements $D^k$ of $\Omega_h$.
$\Gamma$ consists of two parts: the set of unique purely internal edges $\Gamma_i$ and the set of external edges $\Gamma_b=\partial \Omega$ of domain boundaries, and
$\Gamma = \Gamma_i\bigcup\Gamma_b$.
The shape of these elements can be arbitrary although we will mostly consider cases where they are d-dimensional  curvilinear simplices. In the two-dimensional case, the planar triangles are adopted.

Now we introduce the broken Sobolev space for any real number $s$,
$$
H^s(\Omega_h) = \{v\in L^2(\Omega): \forall k=1,2,\ldots, K, v|_{D^k} \in H^s(D^k)\},
$$
equipped with the broken Sobolev norm:
$$
\norm{v}_{H(\Omega_h)} = \big(\sum_{k=1}^{K}\norm{v}^2_{H^s(D^k)}\big)^{1/2}.
$$
Here, we retain the same style with the traditional Sobolev space when no misunderstanding is possible. When $s=0$, $H^0(\Omega_h) = L^2(\Omega_h).$

We introduce the approximation spaces $V_h$. Before that, we first define the space of N-th order polynomials in two variables on the 2-simplex $D$, $P_N^2(D)$ such that the dimension of the approximation polynomial space is
$$ \dim P_N^2(D) = N_p = \binom{N+2}{N},
$$
being the minimum space in which $P_N^2(D)$ may be complete. Let us introduce the nodal set $\{{\x}_i\}_{i=1}^{N_p}$, which are the nodal points or collocation points on $D$. These points must be chosen carefully to ensure well conditioned operators.  The polynomial space $P_N^2(D)$ can be illustrated as
\begin{eqnarray}
P_N^2(D) &=& \text{span} \{\ell_i(\x), i=1,2,\ldots, N_p \} \nonumber\\
         &=& \text{span} \{x^iy^j \ |\ (i,j)\geqslant0, i+j \leqslant N, (x,y) \in D^k \}, \nonumber
\end{eqnarray}
where $\ell_i(\x)$ denotes the two-dimensional multivariate Lagrange interpolation basis function. These two forms are equivalent since $P_N^2(D)$ is a finite dimensional polynomial space. For the explicit way to evaluate the genuinely two-dimensional Lagrange polynomials, we refer to \cite{jan1} and \cite{jan2}.

We also need to define a number of different inner products on the simplex, $D$. Consider the two continuous functions $f,g$, then the inner product, the associated $L^2$ norm and the inner product over the surface of D are defined as
$$
(f,g)_D = \int_D f(\x)g(\x) d\x, \quad (f,f) = ||f||_D^2, \quad (f,g)_{\partial D} = \int_{\partial D} f(\x) g(\x) d s .
$$
The corresponding global broken measures, inner products and norms are
$$
(f,g)_{\Omega} = \sum_{k=1}^{K} (f,g)_{D^k}, \quad (f,f) = \sum_{k=1}^{K} ||f||_{D^k} = ||f||_{\Omega}, \quad (f,g)_{\Gamma} = \sum_{k=1}^{K} (f,g)_{\partial D^k}.
$$

%


Next, we introduce some notations to manipulate numerical fluxes. For $e\in \Gamma$, we refer to the exterior information by a superscript `+' and to the interior information by a superscript `-'.
Using these notations, it is useful to define the average
$$\{u\} = \frac{u^++u^-}{2} \quad \text{on} \ e\in \Gamma_i,$$
$$ \{u\} = u \quad \text{on} \ e \in \Gamma_b, $$
where $u$ can be both a scalar and a vector.
In a similar fashion, we also define the jumps along a unit normal $\n$, as
$$[u]=\n^+u^+ + \n^-u^-, \ [\textbf{u}]=\n^+\cdot\textbf{u}^+ + \n^-\cdot\textbf{u}^- \quad \text{on} \ e\in \Gamma_i,$$
$$[u]=\n u, \ [\textbf{u}]=\n \cdot\textbf{u} \quad \text{on} \ e\in \Gamma_b.$$

Assume that the global solution can be approximated as
\begin{equation}
u(\x,t) \simeq u_h(\x,t) = \bigoplus_{k=1}^{K} u_h^{k}(\x,t) \in V_h = \bigoplus_{k=1}^{K} P_N^2(D^k),
\end{equation}
where $ P_N^2(D^k)$  is the space of N-th order polynomials defined on $D^k$.
The local solution, $u(\x,t)$ can be expressed by
\begin{equation}
u_h^k(\x,t)=  \sum_{i=1}^{N_p}u_h^k(\x_i,t) \ell_i^k(\x), \quad \x \in D^k,
\end{equation}
utilizing a nodal representation.

\section{The local nodal discontinuous Galerkin methods for fractional diffusion equations}
We consider two dimensional fractional problems
\begin{equation}
\frac{\partial u(\x,t)}{\partial t} = d_1 \frac{\partial^{\alpha}u(\x,t)}{\partial x^{\alpha}} + d_2 \frac{\partial^{\beta} u(\x,t)}{\partial y^{\beta}} + f(\x,t), \ \x = (x,y) \in \R^2,
\end{equation}
subject to appropriate boundary and initial conditions. Here, $\alpha, \beta \in (1,2]$, $d_1, d_2 > 0$, $f(\x,t)$ is a source term, and $\frac{\partial^{\alpha}}{\partial x^{\alpha}}, \frac{\partial^{\beta}}{\partial y^{\beta}}$ denote the left \RL fractional derivatives.
For  convenience and to enable the theoretical analysis, we restrict our problem to a homogeneous Dirichlet boundary condition on the form
\begin{equation}
 \left\{ \begin{array}{ll}
\frac{\partial u(\x,t)}{\partial t}
= d_1\frac{\partial}{\partial x}{_a}D_x^{\alpha-2}\frac{\partial}{\partial x}u(\x,t) \\
\qquad \qquad + d_2\frac{\partial}{\partial y}{_c}D_y^{\beta-2}\frac{\partial}{\partial y}u(\x,t) + f(\x,t) & \textrm{$(\x,t) \in \Omega \times[0,T] $},\\
u(\x,0) = u_0(\x) & \textrm{$\x \in \Omega$},\\
u(\x,t) = 0 & \textrm{$(\x,t) \in \partial \Omega\times [0,T]$,}
\end{array} \right.
\label{eq3:2}
\end{equation}
where $\Omega = (a,b)\times(c,d)$. For the simplicity of theoretical analysis, we set $d_1 = d_2 = 1$  without the loss of generality.

\subsection{The primal formulation}
Following the standard approach for the development of local discontinuous Galerkin methods for problems with higher order  derivatives, we introduce the auxiliary variables $\p = (p^x,p^y)$ and $\q= (q^x,q^y)$, and express the problem as
\begin{equation} \label{weakform}
\left\{ \begin{array}{ll}
\frac{\partial u(\x,t)}{\partial t} = \nabla \cdot \q + f(\x,t) &(\x,t)\in \Omega \times [0,T],\\
\q = (\frac{\partial^{\alpha-2}p^x}{\partial x^{\alpha-2}}, \frac{\partial^{\beta-2}p^y}{\partial y^{\beta-2}}) &(\x,t) \in \Omega\times[0,T],\\
\p = \nabla u     &(\x,t)\in  \Omega \times [0,T],\\
u(\x,0) = u_0(\x) &\x\in  \Omega ,\\
u(\x,t) = 0        &(\x,t)\in  \partial \Omega \times [0,T].\\
\end{array}\right.
\end{equation}
We set $h_k := \text{diam}(D^k)$ and $h := \text{max}_{k=1}^{K}h_k$.  Since the fractional integral spaces are embedded in $L^2(\Omega)$, we could assume that the exact solution $(u,\p,\q)$ of (\ref{weakform}) belongs to
\begin{equation}
H^1(0,T;H^1(\Omega_h))  \times (L^2(0,T;L^2(\Omega_h)))^2 \times (L^2(0,T;H^1(\Omega_h)))^2.
\end{equation}
Next, we require that $(u,\p,\q)$ satisfies the local formulation
\begin{eqnarray}
\left(\frac{\partial u(\x,t)}{\partial t},v\right)_{D^k} &=& \left(\n\cdot \q, v\right)_{\partial D^k} - (\q,\nabla v)_{D^k} + (f,v)_{D^k},\\
(\q, \fai)_{D^k} &=& \left(\left(\frac{\partial^{\alpha-2}p^x}{\partial x^{\alpha-2}},\frac{\partial^{\beta-2}p^y}{\partial y^{\beta-2}}\right), \fai\right)_{D^k},\\
(\p, \pai)_{D^k} &=& (u,\n\cdot\pai)_{\partial D^k} - (u, \nabla \cdot \pai)_{D^k},\\
(u(\cdot,0), v)_{D^k}  &=& (u_0(\cdot),v)_{D^k},
\end{eqnarray}
for all test functions $v \in H^1(\Omega_h)$, $\fai = (\phi^x, \phi^y) \in (L^2(\Omega_h))^2 = L^2(\Omega_h) \times L^2(\Omega_h)$, and $\pai =(\pi^x,\pi^y) \in (H^1(\Omega_h))^2 = H^1(\Omega_h) \times H^1(\Omega_h)$.

To complete  the primal formulation of our numerical schemes, we need to introduce the finite dimensional subspace of $H^1(\Omega_h)$, i.e.,
$$V_h = \{v:\Omega_h \rightarrow \mathbb{R}\big|\ v|_{D^k} \in P_N^2(D^k), \ k=1,2,\cdots, K \},$$
and then restrict the trial and test functions $v$ to $V_h$,
$\fai, \pai$ to $ (V_h)^2 = V_h \times V_h$ respectively. Furthermore we define $u_h, \p_h, \q_h$ as the approximation of $u,\p,\q$.
We then seek $(u_h,\p_h,\q_h) \in  H^1(0,T;V_h)\times (L^2(0,T;V_h))^2 \times (L^2(0,T;V_h))^2$ such that for all $v \in V_h, \ \fai, \pai \in (V_h)^2 $ the following holds:
\begin{eqnarray}
\left(\frac{\partial u_h(\x,t)}{\partial t},v\right)_{D^k} &=& (\n\cdot \flux{\q}_h, v)_{\partial D^k} - (\q_h,\nabla v)_{D^k} + (f,v)_{D^k}, \label{eq3:9}\\
(\q_h, \fai)_{D^k} &=& \left(\left(\frac{\partial^{\alpha-2}p_h^x}{\partial x^{\alpha-2}},\frac{\partial^{\beta-2}p_h^y}{\partial y^{\beta-2}}\right), \fai\right)_{D^k},\label{eq3:10}\\
(\p_h, \pai)_{D^k} &=& (\flux{u}_h,\n\cdot\pai)_{\partial D^k} - (u_h, \nabla \cdot \pai)_{D^k}. \label{eq3:11}
\end{eqnarray}
The form (\ref{eq3:9})-(\ref{eq3:11}),  obtained after integration by parts once, is known as the weak form, which is used for theoretical analysis in the following. For the computational part,
we introduce the strong form,  recovered by doing integration by parts once again partially, as
\begin{eqnarray}
\left(\frac{\partial u_h(\x,t)}{\partial t},v\right)_{D^k} &=& (\nabla \cdot \q_h, v)_{D^k} - (\n\cdot (\q_h - \flux{\q}_h), v)_{\partial D^k} ,\label{eq3:12} \\
(\q_h, \fai)_{D^k} &=& \left(\left(\frac{\partial^{\alpha-2}p_h^x}{\partial x^{\alpha-2}},\frac{\partial^{\beta-2}p_h^y}{\partial y^{\beta-2}}\right), \fai\right)_{D^k},\label{eq3:13}\\
(\p_h, \pai)_{D^k} &=& (\nabla u_h, \pai)_{D^k} - (u_h - \flux{u}_h,\n\cdot\pai)_{\partial D^k}. \label{eq3:14}
\end{eqnarray}
The two formulations are mathematically equivalent but computationally different \cite{jan2}. The boundary condition $u|_{\Gamma_b}=0$ will be imposed on the 3rd equation above. To guarantee  consistency, stability and optimal order of convergence of the formulation above, we must define the numerical flux $\flux{u}_h, \flux{\q}_h$ carefully.
In the two-dimensional case, we adopt the central flux, defined as
\begin{equation}
\flux{u}_h = \frac{u^+_h + u^-_h}{2}, \quad \flux{\q}_h =\frac{\q^+_h + \q^-_h}{2},
\end{equation}
at all internal edges, and at the external edges we use
\begin{equation}
\flux{u}_h = 0, \quad \flux{\q}_h = \q_h^+ = \q_h^-.
\end{equation}

\subsection{The semidiscrete scheme}
We will borrow the notations for various operators from \cite{jan1} to express the semidiscrete scheme.
Let us introduce some local and global vector and matrix notations to form the local statement 
\begin{displaymath}
\begin{array}{ll}
\mathbf{u}_h^k = [u_1^k, u_2^k, \cdots, u^k_{N_p}]^T; & \bm{u}_h = [\bm{u}_h^1; \bm{u}_h^2; \cdots, \bm{u}_h^K]^T, \\
(\mathbf{p}^k)^x_h = [(p^k)^x_1, (p^k)^x_2, \cdots, (p^k)^x_{N_p}]^T; & \p_h^x = [(\p^x)_h^1; (\p^x)_h^2; \cdots; (\p^x)_h^K]^T, \\
(\mathbf{p}^k)^y_h = [(p^k)^y_1, (p^k)^y_2, \cdots, (p^k)^y_{N_p}]^T; & \p_h^y = [(\p^y)_h^1; (\p^y)_h^2; \cdots; (\p^y)_h^K]^T, \\
(\mathbf{q}^k)^x_h = [(q^k)^x_1, (q^k)^x_2, \cdots, (q^k)^x_{N_p}]^T; & \q_h^x = [(\q^x)_h^1; (\q^x)_h^2; \cdots; (\q^x)_h^K]^T, \\
(\mathbf{q}^k)^y_h = [(q^k)^y_1, (q^k)^y_2, \cdots, (p^k)^y_{N_p}]^T; & \q_h^y = [(\q^y)_h^1; (\q^y)_h^2; \cdots; (\q^y)_h^K]^T.
\end{array}
\end{displaymath}
We also have the local mass matrix $M^k$ with
$$ M_{ij}^k = (\ell_i^k(\x),\ell_j^k(\x) )_{D^k}$$
and the local spatial stiff matrix $S_x^k, S_y^k$ with the entries
$$
(S_x^k)_{ij} = \left( \frac{\partial \ell_j (\x) }{\partial x}, \ell_i(\x) \right)_{D^k}, \quad
(S_y^k)_{ij} = \left( \frac{\partial \ell_j (\x) }{\partial y}, \ell_i(\x) \right)_{D^k}.
$$
It is a little complex to compute the fractional spatial stiff matrices of (\ref{eq3:13}) and they need to
be stored globally. However,  their elements depend on the affected regions only and they are sparse.

Next, we will discuss how to form  the global fractional spatial stiffness matrix in detail.
Denote $\widetilde{S}_x$ and $\widetilde{S}_y$ as the global fractional spatial stiffness matrices in the $x$ and $y$ direction, respectively.
$\widetilde{S}_x$ and $\widetilde{S}_y$ are $K \times K$ block matrices and every element is a $N_p \times N_p$ matrix.
Here, we use numerical quadrature on a triangle to compute these factional stiffness matrices.
We only describe the specific procedure of forming the global fractional spatial stiff matrix $\widetilde{S}_x$ for simplicity.
$\widetilde{S}_y$ is obtained in the same fashion.

The integral of a function $u$ defined on the physical element $D$ can be approximated by using the Gauss quadrature rule on triangle, i.e.,
$$ \int_{D} u(\x) d\x = J\int_{I} u(\x(\bm{r})) d\bm{r} \simeq J\sum_{j=1}^{Q_D}u(\x(\bm{r}_j))w_j = J\sum_{j=1}^{Q_D}u(\x _j )w_j,$$
where $Q_D$ is the total number of Gauss quadrature weights or points  and $J$ is the constant transformation Jacobian between the physical element and the reference element $I$.
The sets $\{ \bm{r}_j \}_{j=1}^{Q_D} \in I$ and $\{ \x_j \}_{j=1}^{Q_D} \in D$ are one-to-one correspondence by an affine map.
Thus, we have
\begin{equation}
\begin{split}
& (_aD_x^{\alpha-2}p_h^x,\ell^k(\x))_{D^k} \\ &\simeq  J^k \sum_{j=1}^{Q_D} {_a}D_x^{\alpha-2}p_h^x \ell^k(\x_j)w_j   \\
& = J^k \sum_{j=1}^{Q_D}\frac{1}{\Gamma(2-\alpha)}\bigg(\sum_{m\in A}\int_{x_{m-1}}^{x_m}(x_j-\xi)^{1-\alpha}
(p_h^x)^m(\xi,y_j)dx \\
&~~ ~ + \int_{x_{k-1}}^{x_j}(x_j-\xi)^{1-\alpha}(p_h^x)^k(\xi,y_j)dx\bigg) \ell^k(\x_j)w_j,
\end{split}
\end{equation}
where $A$ is the set of the indices of elements affected by each quadrature points on every triangle as sketched in Figure \ref{fig1}.
\begin{figure}
\centering
\includegraphics[width=0.5\textwidth]{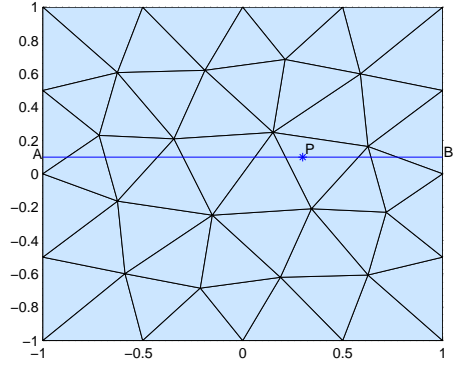}
\caption{All triangles (the ones intersected with the line $AB$) in $x$ direction affected by one Gauss quadrature point P on triangle $D^k$: the ones intersected with $AP$ is for the left fractional derivative/integral and the ones intersected with $PB$ is for the right fractional derivative/integral.}
\label{fig1}
\end{figure}

By using (\ref{eq2:2}) and the definition of the extension of the basis functions, we rewrite the above equation
\begin{equation}
\begin{split}
& (_aD_x^{\alpha-2}p_h^x,\ell^k(\x))_{D^k} \\
& \simeq \frac{J^k}{\Gamma(2-\alpha)} \sum_{j=1}^{Q_D} \bigg[\sum_{m\in A}\bigg(\left(\frac{x_j-x_{m-1}}{2}\right)^{2-\alpha}\int_{-1}^{1}(1-\eta)^{1-\alpha}\\
&~~~ \cdot(p_h^x)^m\left(\frac{x_j+x_{m-1}}{2} + \frac{x_j-x_{m-1}}{2}\eta,y_j\right)d\eta \\
&~~~ - \left(\frac{x_j-x_{m}}{2}\right)^{2-\alpha}\int_{-1}^{1}(1-\eta)^{1-\alpha} \\
&~~~ \cdot(p_h^x)^m\left(\frac{x_j+x_{m}}{2} + \frac{x_j-x_{m}}{2}\eta,y_j\right)d\eta \bigg)\\
&~~~ + \left(\frac{x_j-x_{k}}{2}\right)^{2-\alpha}\int_{-1}^{1}(1-\eta)^{1-\alpha} \\
&~~~ \cdot(p_h^x)^k\left(\frac{x_j+x_{k}}{2} + \frac{x_j-x_{k}}{2}\eta,y_j\right)d\eta \bigg]\ \ell^k(\x_j)w_j.
\end{split}
\end{equation}
Gauss quadratures with weight function $(1-\eta)^{1-\alpha}$ will be taken in the numerical process.
\begin{rem}
In the process of computation, we must pay attention to the fact that there is no link between the Lagrange interpolation points and Gauss quadrature points on the triangle.
\end{rem}
A piece of pseudocode illustrates the formation of the left global fractional spatial stiffness matrix.

\begin{algorithm}
\caption{Construction of the fractional global spatial stiffness matrix $_l\widetilde{S}_x$}
\begin{algorithmic}[1]
\State \% denote $\pmb{\ell}^k = [\ell_1^k, \ell_2^k, \cdots, \ell_{Np}^k]^T$
\State Initialize every block of $_l\widetilde{S}_x$ with zero matrix
\For{k = 1:K}
\For{j = 1:$Q_D$}
\State \% $Q_D$ is the total number of Gauss quadrature points
\State Find the set A and lXdir of every Gauss point $(x_j^k,y_j^k)$ on triangle $D^k$
\State \% lXdir is a length(A)$\times$2 matrix to store the intervals across by $PA$ on each triangle
\State $(_l\widetilde{S}_x) _{kk} =  (_l\widetilde{S}_x) _{kk} + (_{x_{k-1}}D_x^{\alpha-2}\pmb{\ell}^k(\x),\pmb{\ell}^k(\x))_{D^k}$
\For{m=1:length(A)}
\State $(_l\widetilde{S}_x) _{km} =  (_l\widetilde{S}_x) _{km} + (\frac{1}{\Gamma(2-\alpha)}\int_{x_{m-1}}^{x_m}(x-\xi)^{1-\alpha}\pmb{\ell}^m(\xi,y)d\xi,\pmb{\ell}^k(\x))_{D^k}$
\EndFor
\EndFor
\EndFor
\end{algorithmic}\label{alg1}
\end{algorithm}
\begin{rem}
It is unnecessary to store the full global fractional spatial stiffness matrices $_l\widetilde{S}_x$ and
$_l\widetilde{S}_y$ as they are both sparse.
\end{rem}

We recover the global semi-discrete form (\ref{eq3:12})-(\ref{eq3:14}) as
\begin{eqnarray}
M \frac{\partial \bm{u}_h}{\partial t} &=& S_x \q_h^x + S_y \q_h^y - \bigcup_{k=1}^K \int_{\partial D^k}\n \cdot ((\q_h^x, \q_h^y) - \flux{\q}_h) \ell(\x) d\bm{s},\\
M \q_h^x &=& _l\widetilde{S}_x \p_h^x, \quad M \q_h^y = {_l}\widetilde{S}_y \p_h^y, \\
M \p_h^x &=& S_x \bm{u}_h - \bigcup_{k=1}^K \int_{\partial D^k} n_x(\bm{u}_h - \flux{\bm{u}}_h) \ell(\x) d\bm{s}, \\
M \p_h^y &=& S_y \bm{u}_h - \bigcup_{k=1}^K \int_{\partial D^k} n_y(\bm{u}_h - \flux{\bm{u}}_h) \ell(\x) d\bm{s},
\end{eqnarray}
where $M, S_x, S_y$ are global mass and stiffness matrices and there non-zero diagonal block are constructed by $M^k, S_x^k, S_y^k$ respectively, and $\bigcup_{k=1}^K$
refers to the construction of the equations simultaneously for numerical computation.

\section{Stability analysis and error estimates}
Before initiating the theoretical analysis, we need following result:
\begin{lem}
Assume that $\Omega$ has been triangulated into $K$ elements, $D^k$. Then
\begin{equation}
\sum_{k=1}^{K} (\n\cdot\textbf{\emph{u}}, v)_{\partial D^k} = \oint_{\Gamma}\{\textbf{\emph{u}}\}\cdot [v] ds + \oint_{\Gamma_i}\{v\}[\textbf{\emph{u}}]ds.
\end{equation}
\end{lem}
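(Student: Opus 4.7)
The plan is to prove the identity by splitting the sum over element boundaries into contributions from internal edges (shared by exactly two elements) and boundary edges (belonging to exactly one element), and then to reduce each contribution algebraically using the definitions of $\{\cdot\}$ and $[\cdot]$ introduced in the notation section.

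First I would write
\begin{equation*}
\sum_{k=1}^{K}(\n\cdot\textbf{\emph{u}},v)_{\partial D^k}
= \sum_{e\in\Gamma_i}\bigl[(\n^-\cdot\textbf{\emph{u}}^-,v^-)_e+(\n^+\cdot\textbf{\emph{u}}^+,v^+)_e\bigr]
+\sum_{e\in\Gamma_b}(\n\cdot\textbf{\emph{u}},v)_e,
\end{equation*}
grouping the two contributions from each interior edge and recalling that on $e\in\Gamma_i$ one has $\n^+=-\n^-$.

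Next, for a single interior edge $e\in\Gamma_i$ I would expand the right-hand side pointwise using $\{\textbf{\emph{u}}\}=\tfrac12(\textbf{\emph{u}}^++\textbf{\emph{u}}^-)$, $\{v\}=\tfrac12(v^++v^-)$, $[v]=\n^+v^++\n^-v^-$, and $[\textbf{\emph{u}}]=\n^+\cdot\textbf{\emph{u}}^++\n^-\cdot\textbf{\emph{u}}^-$. A short expansion produces eight terms; four of them simplify in pairs like $\tfrac12 \textbf{\emph{u}}^+\!\cdot\n^+v^++\tfrac12 v^+\n^+\!\cdot\textbf{\emph{u}}^+=\textbf{\emph{u}}^+\!\cdot\n^+v^+$, while the remaining four mixed terms cancel in pairs precisely because $\n^+=-\n^-$ on interior edges. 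The result is $\textbf{\emph{u}}^+\!\cdot\n^+v^++\textbf{\emph{u}}^-\!\cdot\n^-v^-$, matching the interior contribution above. For a boundary edge $e\in\Gamma_b$, the conventions $\{\textbf{\emph{u}}\}=\textbf{\emph{u}}$, $\{v\}=v$, $[v]=\n v$ give $\{\textbf{\emph{u}}\}\cdot[v]=\n\cdot\textbf{\emph{u}}\,v$, and since $\Gamma_b\not\subset\Gamma_i$ the second surface integral contributes nothing, matching the boundary contribution.

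Summing these edgewise identities over $e\in\Gamma_i$ and $e\in\Gamma_b$ and recombining interior plus boundary pieces into $\oint_{\Gamma}$ for the $\{\textbf{\emph{u}}\}\cdot[v]$ term (which is defined on all of $\Gamma$) and into $\oint_{\Gamma_i}$ for the $\{v\}[\textbf{\emph{u}}]$ term (which only appears on interior edges) yields the stated identity.

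There is no real obstacle here; the only point that requires care is the consistent use of the sign convention $\n^+=-\n^-$ on interior edges, which is what makes the four ``mixed'' terms cancel and leaves the clean split between $\{\textbf{\emph{u}}\}\cdot[v]$ over all of $\Gamma$ and $\{v\}[\textbf{\emph{u}}]$ over $\Gamma_i$. This is precisely the standard DG ``magic formula'' and the computation is completely algebraic.
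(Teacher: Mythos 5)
Your proof is correct and follows the same route the paper indicates: the paper's one-line proof (``rewriting and then summing the averages and jumps of all the terms along one edge'') is exactly your edgewise expansion, with the mixed terms cancelling via $\n^+=-\n^-$ on interior edges and the boundary conventions handling $\Gamma_b$. You have simply written out in full the algebra the paper leaves implicit.
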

The proof follows directly by rewriting and then summing the averages and jumps of all the terms along one edge.

By using this Lemma and summing all the terms of (\ref{eq3:9})-(\ref{eq3:11}), we  obtain the primal formulation:
Find $(u_h,\p_h,\q_h) \in H^1(0,T;V_h)\times (L^2(0,T;V_h))^2\times (L^2(0,T;V_h))^2$ such that
for all $(v,\fai,\pai) \in H^1(0,T;V_h)\times (L^2(0,T;V_h))^2\times (L^2(0,T;V_h))^2$ the following holds
\begin{equation}
B(u_h,\p_h,\q_h;v,\fai,\pai) = \mathcal{L}(v,\fai,\pai).
\end{equation}
We denote $(\cdot,\cdot) = (\cdot,\cdot)_{\Omega}$ when no misunderstanding is possible. Recall that the numerical flux is single valued and the homogeneous boundary condition is assumed. Then the discrete bilinear form $B$ can be defined as
\begin{equation}
\begin{split}
& B(u_h,\p_h,\q_h;v,\fai,\pai) \\ & :=  \int_{0}^{T}\left(\frac{\partial u_h}{\partial t},v\right) dt  + \int_{0}^{T}(\q_h,\nabla v)dt\\
& ~~~ + \int_{0}^{T} (\q_h,\fai)dt - \int_{0}^{T}\left(\left(\frac{\partial^{\alpha-2}p_h^x}{\partial x^{\alpha-2}},\frac{\partial^{\beta-2}p_h^y}{\partial y^{\beta-2}}\right),\fai \right)dt \\
&  ~~~  + \int_{0}^{T} (\p_h,\pai)dt + \int_{0}^{T}(u_h,\nabla\cdot\pai)dt\\
&  ~~~  - \int_{0}^{T} \left( \oint_{\Gamma}\flux{\q}_h\cdot[v]ds + \oint_{\Gamma_i}\flux{u}_h[\pai] ds \right)dt.
\end{split}
\label{eq4:3}
\end{equation}
The discrete linear form $\mathcal{L}$ is given by
\begin{equation}
\mathcal{L}(u_h,\p_h,\q_h;v,\fai,\pai) = \int_{0}^{T}(f,v)dt.
\end{equation}
Provided that the numerical fluxes $\flux{u}_h, \flux{\q}_h$ are consistent, the primal formulation (\ref{eq4:3}) is consistent, ensuring that the exact solution $(u,\p,\q)$ of (\ref{eq3:2}) satisfies
\begin{equation}
B(u,\p,\q;v,\fai,\pai) = \mathcal{L}(v,\fai,\pai).\label{eq4:5}
\end{equation}
for all $(v,\fai,\pai) \in H^1(0,T;V_h)\times (L^2(0,T;V_h))^2\times (L^2(0,T;V_h))^2$.

\subsection{Numerical stability}
Let $(\widetilde{u}_h,\widetilde{\p}_h,\widetilde{\q}_h) \in H^1(0,T;V_h)\times (L^2(0,T;V_h))^2\times (L^2(0,T;V_h))^2$ be the approximation of the solution
$(u_h,\p_h,\q_h)$. We denote $e_{u_h} := u_h - \widetilde{u}_h$, $e_{\p_h} := \p_h - \widetilde{\p_h}$ and $e_{\q_h} := \q_h - \widetilde{\q}_h$ as the roundoff errors.
\begin{thm}{($L^2$ stability).}
Numerical scheme (\ref{eq4:5}) with central flux is $L^2$ stable, and for all $t\in (0,T)$ its solution satisfies
\begin{equation}
\begin{split}
& \parallel e_{u_h}(\cdot,t)\parallel^2_{L^2(\Omega)} \\
 & =  \parallel e_{u_h}(\cdot,0)\parallel^2_{L^2(\Omega)} \\
& ~~~ - \ 2cos((\alpha/2-1)\pi)\int_{0}^{t} \int_{c}^{d}\parallel e_{p_h^x}(\cdot,y,t) \parallel^2_{H^{\frac{\alpha}{2}-1}} dydt\\
& ~~~ - \ 2cos((\beta/2-1)\pi) \int_{0}^{t} \int_{a}^{b}\parallel e_{p_h^y}(x,\cdot,t) \parallel^2_{H^{\frac{\beta}{2}-1}} dxdt.
\end{split}
\end{equation}
\label{thm1}
\end{thm}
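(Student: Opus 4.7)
The plan is to apply the energy method to the primal formulation~(\ref{eq4:3}) with the error as its own test function. By linearity of the scheme, subtracting the identity satisfied by $(u_h,\p_h,\q_h)$ from the one satisfied by $(\widetilde{u}_h,\widetilde{\p}_h,\widetilde{\q}_h)$ yields
\begin{equation*}
B(e_{u_h},e_{\p_h},e_{\q_h};v,\fai,\pai)=0
\end{equation*}
for every admissible test triple, since the source term $f$ drops out. I would then substitute $(v,\fai,\pai)=(e_{u_h},e_{\p_h},e_{\q_h})$ and process the seven summands of $B$ one by one.

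The time-derivative term produces $\tfrac12\tfrac{d}{dt}\|e_{u_h}\|^2_{L^2(\Omega)}$. The two middle summands, namely $(e_{\q_h},e_{\p_h})$ and $-(({_a}D_x^{\alpha-2}e_{p_h^x},{_c}D_y^{\beta-2}e_{p_h^y}),e_{\p_h})$, cancel outright, because equation~(\ref{eq3:10}) applied to the errors forces exactly this equality when tested against $\fai=e_{\p_h}$. The volume pair $(e_{\q_h},\nabla e_{u_h})+(e_{u_h},\nabla\cdot e_{\q_h})$ combines by element-wise integration by parts into $\sum_k(\n\cdot e_{\q_h},e_{u_h})_{\partial D^k}$, which by the preceding lemma equals $\oint_\Gamma\{e_{\q_h}\}\cdot[e_{u_h}]\,ds+\oint_{\Gamma_i}\{e_{u_h}\}[e_{\q_h}]\,ds$. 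Under the central flux $\widehat{u}_h=\{u_h\}$, $\widehat{\q}_h=\{\q_h\}$ on $\Gamma_i$ and $\widehat{u}_h=0$, $\widehat{\q}_h=\q_h^-$ on $\Gamma_b$ (together with the homogeneous Dirichlet condition), this quantity is precisely the negative of the flux contribution already present in $B$, so all edge integrals cancel. What is left is
\begin{equation*}
\int_0^t\!\Bigl[\tfrac12\tfrac{d}{ds}\|e_{u_h}\|^2_{L^2(\Omega)}+(e_{\p_h},e_{\q_h})\Bigr]ds=0.
\end{equation*}

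To recast $(e_{\p_h},e_{\q_h})$ as the claimed dissipation, I invoke~(\ref{eq3:10}) once more to obtain $(e_{\p_h},e_{\q_h})=({_a}D_x^{\alpha-2}e_{p_h^x},e_{p_h^x})_{\Omega}+({_c}D_y^{\beta-2}e_{p_h^y},e_{p_h^y})_{\Omega}$. Slicing perpendicular to the fractional derivative so that the $x$-piece becomes $\int_c^d({_a}D_x^{-(2-\alpha)}e_{p_h^x}(\cdot,y),e_{p_h^x}(\cdot,y))_{L^2(a,b)}\,dy$, I would apply the semigroup identity ${_a}D_x^{-(2-\alpha)}={_a}D_x^{-(1-\alpha/2)}\circ{_a}D_x^{-(1-\alpha/2)}$ together with Lemma~\ref{lemma2.1} to move one factor onto its right-sided adjoint, and then Lemma~\ref{lem2.3} (applied after extending $e_{p_h^x}$ by zero outside $(a,b)$) to obtain $\cos((1-\alpha/2)\pi)\|e_{p_h^x}(\cdot,y)\|^2_{H^{\alpha/2-1}}$. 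Parity of cosine gives $\cos((1-\alpha/2)\pi)=\cos((\alpha/2-1)\pi)$; treating the $y$-piece identically and integrating in time from $0$ to $t$ delivers the stated identity.

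The main obstacle is the edge-term bookkeeping after the energy substitution: one must carefully track the signs on the two sides of each internal edge so that the preceding lemma, the central-flux choice and the homogeneous Dirichlet condition conspire to eliminate the volume integration-by-parts contribution against the flux contribution inside $B$. A secondary technical point is the semigroup and adjoint manipulation used to turn the non-symmetric quantity $({_a}D_x^{\alpha-2}e_{p_h^x},e_{p_h^x})$ into the symmetric squared $H^{\alpha/2-1}$-norm, which relies on extending the errors by zero outside $\Omega$ so that Lemmas~\ref{lemma2.1} and~\ref{lem2.3}, stated for the whole real line, are applicable.
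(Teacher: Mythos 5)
Your proposal is correct and follows essentially the same route as the paper: the perturbation equation from the bilinear form $B$, an energy choice of test functions, cancellation of all edge terms by combining element-wise integration by parts (the boundary-sum lemma) with the central flux and the homogeneous Dirichlet data, and the semigroup/adjoint/Fourier argument (Lemmas \ref{lemma2.1} and \ref{lem2.3}, after slicing in the transverse variable and extending by zero) that converts $({_a}D_x^{\alpha-2}e_{p_h^x},e_{p_h^x})$ into $\cos((\alpha/2-1)\pi)\parallel e_{p_h^x}\parallel^2_{H^{\alpha/2-1}}$. The only cosmetic difference is the sign of the second test function: the paper takes $\fai=-e_{\p_h}$ so that the $(\q_h,\fai)$ and $(\p_h,\pai)$ terms cancel algebraically inside $B$, whereas you take $\fai=+e_{\p_h}$ and cancel against the fractional term by invoking the error form of (\ref{eq3:10}), which is legitimate since $B(e_{u_h},e_{\p_h},e_{\q_h};0,\fai,0)=0$ for all $\fai$ yields exactly that relation.
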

\begin{proof}
We just prove the case $t = T$. From (\ref{eq4:3}), we recover the perturbation equation
\begin{equation}
B(e_{u_h},e_{\p_h},e_{\q_h};v,\fai,\pai) = 0
\end{equation}
for all $(v,\fai,\pai) \in H^1(0,T;V_h)\times (L^2(0,T;V_h))^2\times (L^2(0,T;V_h))^2$. Take $v = e_{u_h}$, $\fai = - e_{\p_h}$, and
$\pai = e_{\q_h}$, to obtain 
\begin{equation}
\begin{split}
0 &= B(e_{u_h},e_{\p_h},e_{\q_h};e_{u_h},-e_{\p_h},e_{\q_h}) \\
&= \frac{1}{2}\int_{0}^{T} \frac{\partial }{\partial t} \parallel e_{u_h}(\cdot,t)\parallel^2_{L^2(\Omega)} dt + \int_{0}^{T} \int_{\Omega}\nabla\cdot (e_{u_h}e_{\q_h})d\x dt \\
& ~~~ + \ 2cos((\alpha/2-1)\pi)\int_{0}^{T} \int_{c}^{d}\parallel e_{p_h^x}(\cdot,y,t) \parallel^2_{H^{\frac{\alpha}{2}-1}} dydt\\
& ~~~ + \ 2cos((\beta/2-1)\pi) \int_{0}^{T} \int_{a}^{b}\parallel e_{p_h^y}(x,\cdot,t) \parallel^2_{H^{\frac{\beta}{2}-1}} dxdt\\
& ~~~ - \int_{0}^{T} \left(\oint_{\Gamma}\flux{e}_{\q_h}\cdot[e_{u_h}]ds + \oint_{\Gamma_i}\flux{e}_{u_h}[e_{\q_h}]ds\right)dt.
\end{split}
\label{eq4:8}
\end{equation}
In (\ref{eq4:8}), integration by parts yields
\begin{equation}
\begin{split}
\int_{\Omega_h}\nabla\cdot(e_{u_h}e_{\q_h})d\x  & = \sum_{k=1}^{K}\int_{D^k}\nabla\cdot(e_{u_h}e_{\q_h})d\x
= \sum_{k=1}^{K}\oint_{\partial D^k}\n\cdot e_{\q_h}e_{u_h}ds\\
&= \oint_{\Gamma_i}(\n^+\cdot e_{\q_h}^+e_{u_h}^+ + \n^-\cdot e_{\q_h}^- e_{u_h}^-)ds + \oint_{\Gamma_b} \n \cdot e_{\q_h} e_{u_h} ds.
\end{split}\label{eq4:9}
\end{equation}
With the central flux we recover
\begin{equation}
\begin{split}
& \oint_{\Gamma}\flux{e}_{\q_h}\cdot[e_{u_h}]ds + \oint_{\Gamma_i}\flux{e}_{u_h}[e_{\q_h}]ds \\
&= \oint_{\Gamma_i}(\n^+\cdot e_{\q_h}^+e_{u_h}^+ + \n^-\cdot e_{\q_h}^- e_{u_h}^-)ds + \oint_{\Gamma_b} \n \cdot e_{\q_h} e_{u_h} ds.
\end{split}\label{eq4:10}
\end{equation}
Combining (\ref{eq4:8})-(\ref{eq4:10}), the desired result follows.
\end{proof}

\subsection{Error estimate}
For the error estimate, we define the orthogonal projection operators, $\mathbb{P}: H^1(\Omega_h)\rightarrow V_h$, $\mathbb{Q}:((L^2(\Omega_h))^2\rightarrow (V_h)^2$ and $\mathbb{S}: (H^1(\Omega_h))^2\rightarrow (V_h)^2$. For all the elements, $D^k,$ $k= 1,2,\cdots,K$,
$\mathbb{P}, \mathbb{Q}, \mathbb{S}$ are defined to satisfy
\begin{eqnarray}
(\mathbb{P}u-u, v)_{D^k} = 0 &\forall v\in P_N^2(D^k), \nonumber\\
(\mathbb{Q}\bm{u}-\bm{u}, \bm{v})_{D^k} = 0 &\forall \bm{v}\in (P_N^2(D^k))^2, \nonumber\\
(\mathbb{S}\bm{u}-\bm{u}, \bm{v})_{D^k} = 0 &\forall \bm{v}\in (P_N^2(D^k))^2. \nonumber
\end{eqnarray}

\begin{thm}
The $L^2$ error of the numerical scheme (\ref{eq4:5}) with a central flux satisfies
\begin{equation}
\parallel u(\x,t) - u_h(\x,t) \parallel_{L^2(\Omega_h)} \leqslant c(\alpha,\beta)h^{N+1}, \quad \alpha, \beta\in(1,2),
\end{equation}
where $c$ is dependent on $\alpha, \beta, \Omega$ and $N$ represents the order of polynomial.
\end{thm}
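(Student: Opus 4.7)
The plan is to mimic the standard LDG energy argument used in the stability proof, combined with Galerkin orthogonality and standard approximation estimates for the $L^2$-projections $\mathbb{P}, \mathbb{Q}, \mathbb{S}$. First I would decompose the errors through the projections: write
\begin{equation*}
u - u_h = \eta_u + \xi_u, \quad \p - \p_h = \eta_{\p} + \xi_{\p}, \quad \q - \q_h = \eta_{\q} + \xi_{\q},
\end{equation*}
where $\eta_u := u - \mathbb{P}u$, $\xi_u := \mathbb{P}u - u_h$, and similarly $\eta_{\p} := \p - \mathbb{S}\p$, $\xi_{\p} := \mathbb{S}\p - \p_h$, $\eta_{\q} := \q - \mathbb{Q}\q$, $\xi_{\q} := \mathbb{Q}\q - \q_h$. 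The $\xi$ quantities lie in the discrete spaces and the $\eta$ quantities enjoy the optimal approximation bound $\|\eta_{\star}\|_{L^2(D^k)} \leq C h_k^{N+1} |u|_{H^{N+1}(D^k)}$ (and analogous estimates for $\p,\q$), together with edge trace estimates of order $h_k^{N+1/2}$.

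Next I would exploit consistency of the scheme: since the numerical fluxes are consistent, the exact solution satisfies (\ref{eq4:5}), so subtracting from the discrete formulation gives the Galerkin orthogonality
\begin{equation*}
B(u-u_h, \p-\p_h, \q-\q_h; v, \fai, \pai) = 0
\end{equation*}
for all admissible $(v,\fai,\pai)\in V_h\times (V_h)^2\times (V_h)^2$. Splitting the left-hand side yields
\begin{equation*}
B(\xi_u, \xi_{\p}, \xi_{\q}; v, \fai, \pai) = -B(\eta_u, \eta_{\p}, \eta_{\q}; v, \fai, \pai).
\end{equation*}
Now I choose the test functions exactly as in the stability proof, namely $v = \xi_u$, $\fai = -\xi_{\p}$, $\pai = \xi_{\q}$. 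Repeating the computation that led to Theorem \ref{thm1} (integration by parts, cancellation of the central-flux edge terms, and Lemma \ref{lem2.3} to convert the fractional derivative terms into the $H^{\alpha/2-1}$ and $H^{\beta/2-1}$ seminorms of $\xi_{p_h^x}$ and $\xi_{p_h^y}$) turns the left-hand side into
\begin{equation*}
\tfrac12\tfrac{d}{dt}\|\xi_u\|_{L^2(\Omega)}^2 + 2\cos((\alpha/2-1)\pi)\|\xi_{p_h^x}\|_{H^{\alpha/2-1}}^2 + 2\cos((\beta/2-1)\pi)\|\xi_{p_h^y}\|_{H^{\beta/2-1}}^2,
\end{equation*}
all three terms having the correct sign since $\alpha,\beta\in(1,2)$.

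For the right-hand side I would bound each contribution from the $\eta$ terms by Cauchy–Schwarz and Young's inequality. The volume terms $(\eta_{\q},\nabla \xi_u)$ and $(\eta_u,\nabla\cdot \xi_{\q})$ are handled by the usual projection properties; the edge terms reduce, after the flux cancellations, to jumps of $\eta_u$ and $\eta_{\q}$ against traces of $\xi_u,\xi_{\q}$, which by the trace inequality and the approximation estimates contribute $O(h^{N+1})$ factors. The delicate terms are the fractional ones,
\begin{equation*}
\int_0^T\left(\left(\tfrac{\partial^{\alpha-2}\eta_{p^x}}{\partial x^{\alpha-2}},\tfrac{\partial^{\beta-2}\eta_{p^y}}{\partial y^{\beta-2}}\right),-\xi_{\p}\right)dt,
\end{equation*}
which I would control by pairing with Lemma \ref{lemma2.1} (self-adjointness of the fractional integrals) to move the fractional integral onto $\xi_{\p}$, and then applying Theorem \ref{thm2.5} together with approximation error in the $H^{-\alpha/2+1}_0$ norm, absorbing the $\xi_{\p}$-dependent factor into the coercive fractional terms on the left. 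After these estimates I integrate in time from $0$ to $T$, use $\xi_u(\cdot,0)=0$ from the choice $u_h(\cdot,0)=\mathbb{P}u_0$, apply Gronwall to absorb $\|\xi_u\|_{L^2}^2$, and conclude $\|\xi_u\|_{L^2(\Omega)}\leq c(\alpha,\beta)h^{N+1}$. The triangle inequality with $\|\eta_u\|_{L^2}\leq C h^{N+1}$ then delivers the theorem.

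The main obstacle I anticipate is the fractional coupling: absorbing the mixed terms arising from $\eta_{\p}$ against $\xi_{\p}$ under the weak fractional seminorm, since the projection operators are tailored to the $L^2$ inner product rather than to the fractional norm. Making this absorption rigorous on triangular meshes (where the $x$- and $y$-lines cutting through an element do not align with element boundaries) requires carefully tracking the dependence of the constants on $\alpha,\beta$ via $\cos((\alpha/2-1)\pi)$ and $\cos((\beta/2-1)\pi)$, and invoking the embedding $L^2(\Omega)\hookrightarrow H_0^{-\alpha/2+1}(\Omega)$ of Theorem \ref{thm2.5} to bound the fractional norm of $\eta_{\p}$ by its $L^2$ norm without losing the optimal $h^{N+1}$ order.
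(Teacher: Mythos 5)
Your proposal is correct and follows essentially the same route as the paper: the same $\eta/\xi$ projection splitting (the paper calls the projection errors $v^e,\fai^e,\pai^e$), the same test-function choice $v=\xi_u$, $\fai=-\xi_{\p}$, $\pai=\xi_{\q}$ mirroring the stability proof, vanishing of the volume terms by $L^2$-orthogonality of the projections, trace estimates for the edge terms, and the adjoint property plus the embedding of Theorem \ref{thm2.5} with Young's inequality to absorb the fractional terms into the coercive $\cos((\alpha/2-1)\pi)$ and $\cos((\beta/2-1)\pi)$ terms, finishing with Gr\"onwall and the triangle inequality. The only differences are cosmetic (swapped labels of $\mathbb{Q}$ and $\mathbb{S}$, and explicitly taking $u_h(\cdot,0)=\mathbb{P}u_0$ where the paper leaves the initial term implicit).
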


\begin{proof}
We denote
$$ e_u = u(\x,t) - u_h(\x,t), \ e_{\p}(\x,t) = \p(\x,t) - \p_h(\x,t), \ e_\q = \q(\x,t) - \q_h(\x,t);$$
and recover the error equation
\begin{equation}
B(e_u,e_\p,e_\q;v,\fai,\pai) = 0 \label{eq4:14}
\end{equation}
for all $(v,\fai,\pai) \in H^1(0,T;V_h)\times (L^2(0,T;V_h))^2\times (L^2(0,T;V_h))^2$.
Take $$ v = \mathbb{P}u - u_h,\ \fai = \p_h - \mathbb{Q}\p,\ \pai = \mathbb{S}\q - \q_h$$
in (\ref{eq4:14}). After rearranging terms, we obtain
\begin{equation}
B(v,-\fai,\pai;v,\fai,\pai) = B(v^e,-\fai^e,\pai^e;v,\fai,\pai), \label{eq4:15}
\end{equation}
where $v^e, \bm{w}^e$ and $\bm{z}^e$ are given as
$$v^e = \mathbb{P}u - u, \fai^e = \p - \mathbb{Q}\p, \pai^e = \mathbb{S}\q - \q.$$
Following the discussion in the proof of Theorem \ref{thm1}, the left side of (\ref{eq4:15}) becomes
\begin{equation}
\begin{split}
& B(v,-\fai,\pai;v,\fai,\pai) \\
& = \frac{1}{2}\int_{0}^T\frac{\partial}{\partial t}\parallel v(\cdot,t)\parallel^2_{L^2(\Omega_h)}dt \\
& ~~~ + cos((\alpha/2-1)\pi)\int_0^T\int_{c}^{d}\parallel \phi^x(\cdot,y,t)\parallel^2_{H^{\frac{\alpha}{2}-1}{(a,b)}}dydt\\
& ~~~ + cos((\beta/2-1)\pi)\int_0^T\int_{a}^{b}\parallel \phi^y(x,\cdot,t)\parallel^2_{H^{\frac{\beta}{2}-1}{(c,d)}}dxdt;
\end{split}
\end{equation}
and the right hand side can be expressed as
\begin{equation}
B(v^e,-\fai^e,\pai^e;v,\fai,\pai) = \Rmnum{1} + \Rmnum{2} + \Rmnum{3} + \Rmnum{4},
\end{equation}
where
\begin{equation}
\Rmnum{1} = \int_0^T \left(\frac{\partial v^e(\cdot,t)}{\partial t}, v(\cdot,t)\right)dt,
\end{equation}
\begin{equation}
\Rmnum{2} = \int_0^T\big[(\pai^e,\nabla v) + (v^e, \nabla\cdot\pai) + (\pai^e, \fai) -(\fai^e, \pai) \big]dt,
\end{equation}
\begin{equation}
\Rmnum{3} = -\int_{0}^{T} \left( \int_{\Gamma}\flux{\pai}^e\cdot[v] ds + \int_{\Gamma_i}v^e \ [\pai] ds \right) dt,
\end{equation}
\begin{equation}
\Rmnum{4} =  \int_0^T\left[   \left(\frac{\partial^{\alpha-2}{\phi^e}^x }{\partial x^{\alpha-2}},\phi^x\right)  + \left(\frac{\partial^{\beta-2}{\phi^e}^y }{\partial y^{\beta-2}},\phi^y\right) \right]dt.
\end{equation}
Using Cauchy-Schwarz inequality and  standard approximation theory, we obtain
\begin{equation}
\begin{split}
\Rmnum{1} &= \frac{1}{2}\int_0^T \parallel \frac{\partial v^e(\cdot,t)}{\partial t} \parallel^2_{L^2(\Omega_h)} dt + \frac{1}{2}\int_0^T \parallel v(\cdot,t)\parallel_{L^2(\Omega_h)} dt \\
&\leqslant ch^{2N+2} + \frac{1}{2}\int_0^T\parallel v(\cdot,t) \parallel^2_{L^2(\Omega)}dt,
\end{split}
\end{equation}
where c is a constant.

In $\Rmnum{2}$, all terms vanish because of the Galerkin orthogonality.
To deal with the term $\Rmnum{3}$, let us reexamine the construction of the Lagrange interpolation bases.
Since we require the same number of Lagrange interpolation points on every element, the basis functions of both elements are equal along the internal edge $e \in \Gamma_i$.
Applying the Cauchy-Schwarz inequality and the trace inequality, $\Rmnum{3}$ can be estimated as
\begin{eqnarray}
\Rmnum{3} &=& -\int_{0}^{T}  \int_{\Gamma_b}\flux{\pai}^e\cdot[v] ds dt \nonumber \\
          &\leqslant& \frac{1}{2} \int_{0}^{T} \int_{\partial \Omega}(\flux{\pai}^e)^2 ds
          + \frac{1}{2} \int_{0}^{T} \int_{\partial \Omega} ([v])^2ds dt \nonumber \\
          &\leqslant& c_1h^{2N+2} + c_2 \int_0^T\parallel v(\cdot,t)\parallel^2_{L^2(\Omega)}dt,
\end{eqnarray}
where $\Gamma_b = \partial\Omega$, $c_1$ and $c_2$ are related with $N$, $\Omega$. Due to the adjoint property and the embedding theorem for the fractional integral, by using the Young's inequality, $\Rmnum{4}$ can be expressed as
\begin{equation}
\begin{split}
\Rmnum{4} &\leqslant \int_0^T \bigg(  \frac{1}{2\varepsilon_1}\parallel {\phi^e}^x(\cdot,t) \parallel^2_{L^2(\Omega_h)} + \frac{\varepsilon_1}{2}\int_c^d\parallel\phi^x(\cdot,y,t)\parallel^2_{H^{\alpha-2}(a,b)}dy\\
&~~~ +\frac{1}{2\varepsilon_2}\parallel{\phi^e}^y(\cdot,t)\parallel^2_{L^2(\Omega_h)} + \frac{\varepsilon_2}{2} \int_a^b\parallel\phi^y(x,\cdot,t)\parallel^2_{H^{\beta-2}(c,d)}dx\bigg )dt\\
&\leqslant c/\varepsilon h^{2N+2} + c\varepsilon \int_0^T \Big( \int_c^d\parallel\phi^x(\cdot,y,t)\parallel^2_{H^{\frac{\alpha}{2}-1}(a,b)}dy\\
 &~~~ + \int_a^b\parallel\phi^y(x,\cdot,t)\parallel^2_{H^{\frac{\beta}{2}}(c,d)}dx \Big)dt,
\end{split}
\end{equation}
provided $\varepsilon$ is sufficiently small such that $cos((\alpha/2-1)\pi)>c\varepsilon$ and $cos((\beta/2-1)\pi)>c\varepsilon$.
 Combining all the above estimates, we  obtain
\begin{equation}
\begin{split}
& \frac{1}{2}\int_{0}^T\frac{\partial}{\partial t}\parallel v(\cdot,t)\parallel^2_{L^2(\Omega_h)}dt \\
& ~~~+ (cos((\alpha/2-1)\pi)-c\varepsilon)\int_0^T\int_{c}^{d}\parallel \phi^x(\cdot,y,t)\parallel^2_{H^{\frac{\alpha}{2}-1}{(a,b)}}dydt\\
& ~~~+ (cos((\beta/2-1)\pi)-c\varepsilon)\int_0^T\int_{a}^{b}\parallel \phi^y(x,\cdot,t)\parallel^2_{H^{\frac{\beta}{2}-1}{(c,d)}}dxdt \\
& \leqslant(c/\varepsilon)h^{2N+2} + c\int_0^T\parallel v(\cdot,t)\parallel^2_{L^2(\Omega_h)}dt. \\
\end{split}
\end{equation}
By simplifying further, this yields
\begin{equation}
\begin{split}
&\frac{1}{2}\parallel v(\cdot,T)\parallel^2_{L^2(\Omega_h)}\\
& ~~~+ (cos((\alpha/2-1)\pi)-c\varepsilon)\int_0^T \int_c^d\parallel\phi^x(\cdot,y,t)\parallel^2_{H^{\frac{\alpha}{2}-1}(a,b)}dydt  \\
& ~~~+ (cos((\beta/2-1)\pi)-c\varepsilon)\int_{0}^T\int_c^d\parallel\phi^y(x,\cdot,t)\parallel^2_{H^{\frac{\beta}{2}-1}(c,d)} dxdt \\
&\leqslant \frac{1}{2}\parallel v(\cdot,0)\parallel^2_{L^2(\Omega_h)} + (c/\varepsilon)h^{2N+2} + c\int_0^T\parallel v(\cdot,t)\parallel^2_{L^2(\Omega_h)}dt \\
&\leqslant (c/\varepsilon)h^{2N+2} + c\int_0^T\parallel v(\cdot,t)\parallel^2_{L^2(\Omega_h)}dt.
\end{split}
\end{equation}
From the Gr\"{o}nwall's lemma and  standard approximation theory, the desired result follows.
\end{proof}

\section{Numerical results}
In this section, we offer a few numerical results to validate analysis. To deal with the method-of-line fractional PDE, i.e., the classical ODE system, we utilize a low-storage five stage fourth order explicit Runge-Kutta method \cite{jan1}. To ensure the overall error is dominated by space error, small time steps are used.

\noindent \textbf{Example 1} We consider the problem
\begin{equation}
\frac{\partial u(x,y,t)}{\partial t} = {_{-1}}D_x^{\alpha}u(x,y,t) + {_{-1}}D_y^{\alpha}u(x,y,t)+ f(x,y,t),
\end{equation}
where
\begin{displaymath}
\begin{split}
f(x,y,t)
&= -e^{-t}\big((x^2-1)^3(y^2-1)^3 + (y^2-1)^3_{-1}D_x^{\alpha-2}(6(x^2-1)(5x^2-1))\\
&+ (x^2-1)^3_{-1}D_y^{\beta-2}(6(y^2-1)(5y^2-1))\big)
\end{split}
\end{displaymath}
on the computational domain $\Omega=(-1,1)\times(-1,1)$ and $\alpha, \beta \in (1,2)$.
We consider the initial condition
\begin{equation}
u(x,y,0) = (x^2-1)^3(y^2-1)^3
\end{equation}
and the Dirichlet boundary condition
\begin{equation}
u(x,y,t) = 0, \quad (x,y) \in \partial \Omega.
\end{equation}
The exact solution is $u(x,y,t)=e^{-t}(x^2-1)^3(y^2-1)^3$.


\begin{table}
\centering
\caption{Numerical errors ($L_2$) and order of convergence on unstructured meshes for Example 1. N denotes the order of polynomial in two variables, and K is the total number of triangle elements.}
    \begin{tabular}{*{9}{|c}|}
      \hline
      &K & \multicolumn{1}{c|}{32} & \multicolumn{2}{c|}{137} & \multicolumn{2}{c|}{378} & \multicolumn{2}{c|}{899} \\
    \hline
     N &$(\alpha,\beta)$ &  error  &  error & order &  error & order &  error & order   \\ \hline
     \multirow{4}{*}{1} & (1.01,1.01)  &1.55e-1 &3.48e-2 &2.16 &1.27e-2 &1.99 & 5.39e-3 &2.11 \\
                    \cline{2-9}
                    & (1.5,1.5) & 1.44e-1 &3.21e-2 &2.17 &1.22e-2 &1.94 &5.09e-3 &2.16 \\
                    \cline{2-9}
                    & (1.8,1.8) & 1.34e-1 &3.03e-2 &2.14 &1.13e-2 &1.93 &5.40e-3 &1.82\\
                    \cline{2-9}
                    &(1.99,1.99)& 1.27e-1 &2.94e-2 &2.11 &1.07e-2 &1.98 &4.48e-3 &2.15\\
                    \hline
      &K & \multicolumn{1}{c|}{32} & \multicolumn{2}{c|}{137} & \multicolumn{2}{c|}{378} & \multicolumn{2}{c|}{899} \\\hline
     \multirow{4}{*}{2} & (1.01,1.01) &1.65e-2 &4.39e-3 &2.62 &5.22e-4 &2.92 &1.44e-4 &2.97 \\
                    \cline{2-9}
                    & (1.5,1.5) &1.40e-2 &3.47e-3 &2.76 &3.41e-3 &3.18 &9.19e-5 &3.02 \\
                    \cline{2-9}
                    & (1.8,1.8) &1.33e-2 &1.57e-3 &3.08 &3.37e-4 &3.01 &9.20e-5 &3.20 \\
                    \cline{2-9}
                    &(1.99,1.99)&1.36e-2 &1.57e-3 &3.11 &3.56e-4 &2.90 &9.75e-5 &3.20\\
                    \hline
      &K & \multicolumn{1}{c|}{88} & \multicolumn{2}{c|}{137} & \multicolumn{2}{c|}{378} & \multicolumn{2}{c|}{562} \\  \hline
     \multirow{4}{*}{3} & (1.01,1.01) &5.51e-4 &2.17e-4  &4.20 &3.07e-5 &3.86 &1.53e-5 &3.82\\
                    \cline{2-9}
                    & (1.5,1.5) &3.73e-4  &1.38e-4 &4.49 &1.88e-5 &3.93 &8.53e-6 &4.33\\
                    \cline{2-9}
                    & (1.8,1.8) &3.30e-4  &1.33e-4 &4.10 &2.00e-5 &3.73 &8.05e-6 &4.99 \\
                    \cline{2-9}
                    &(1.99,1.99)&3.83e-4  &1.61e-4 &3.92 &2.55e-5 &3.63 &1.81e-5 &4.23\\
                    \hline
    \end{tabular}

    \label{table1}
\end{table}

\begin{figure}
\centering
\includegraphics[width=0.35\textwidth]{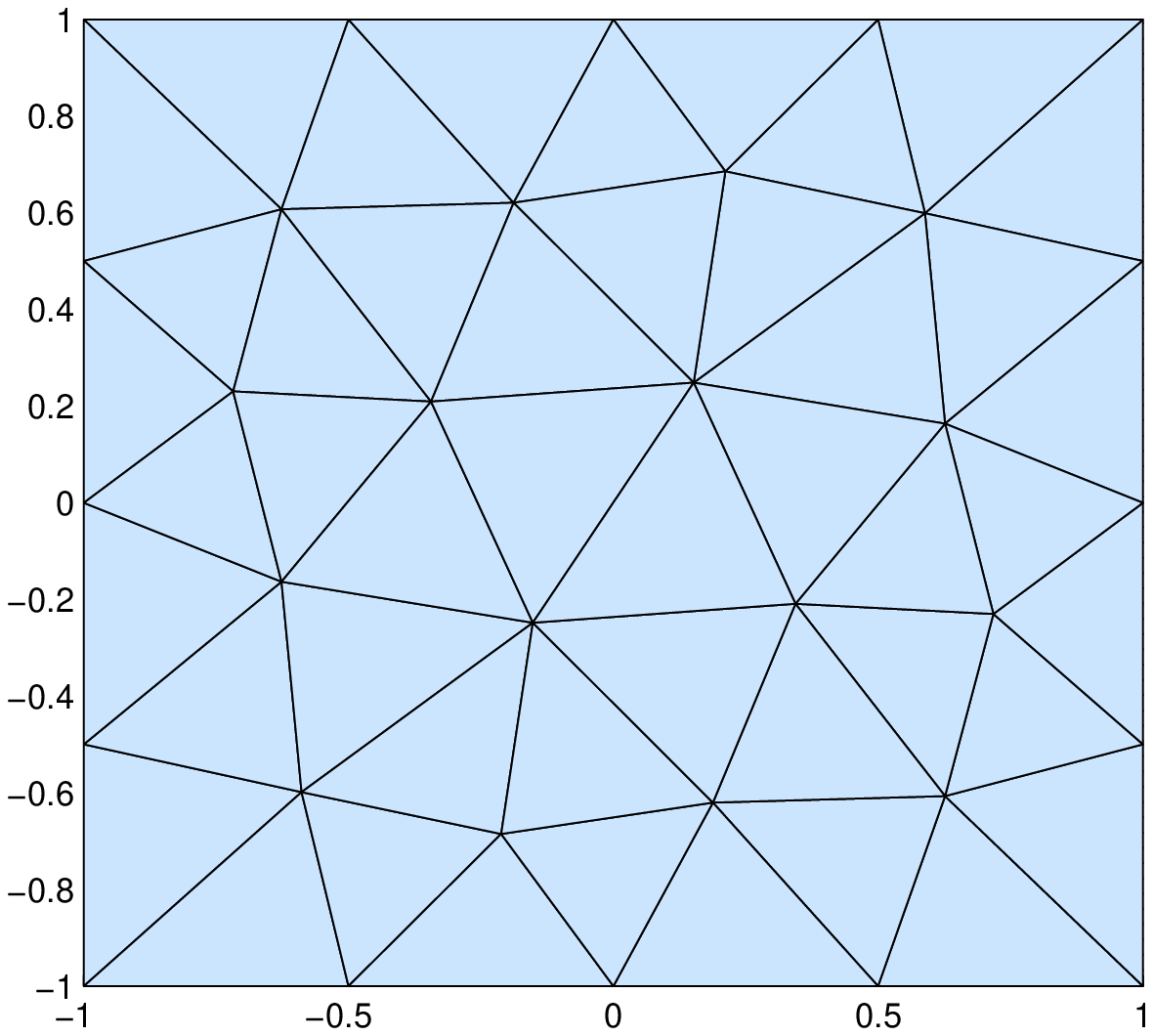}
\includegraphics[width=0.37\textwidth]{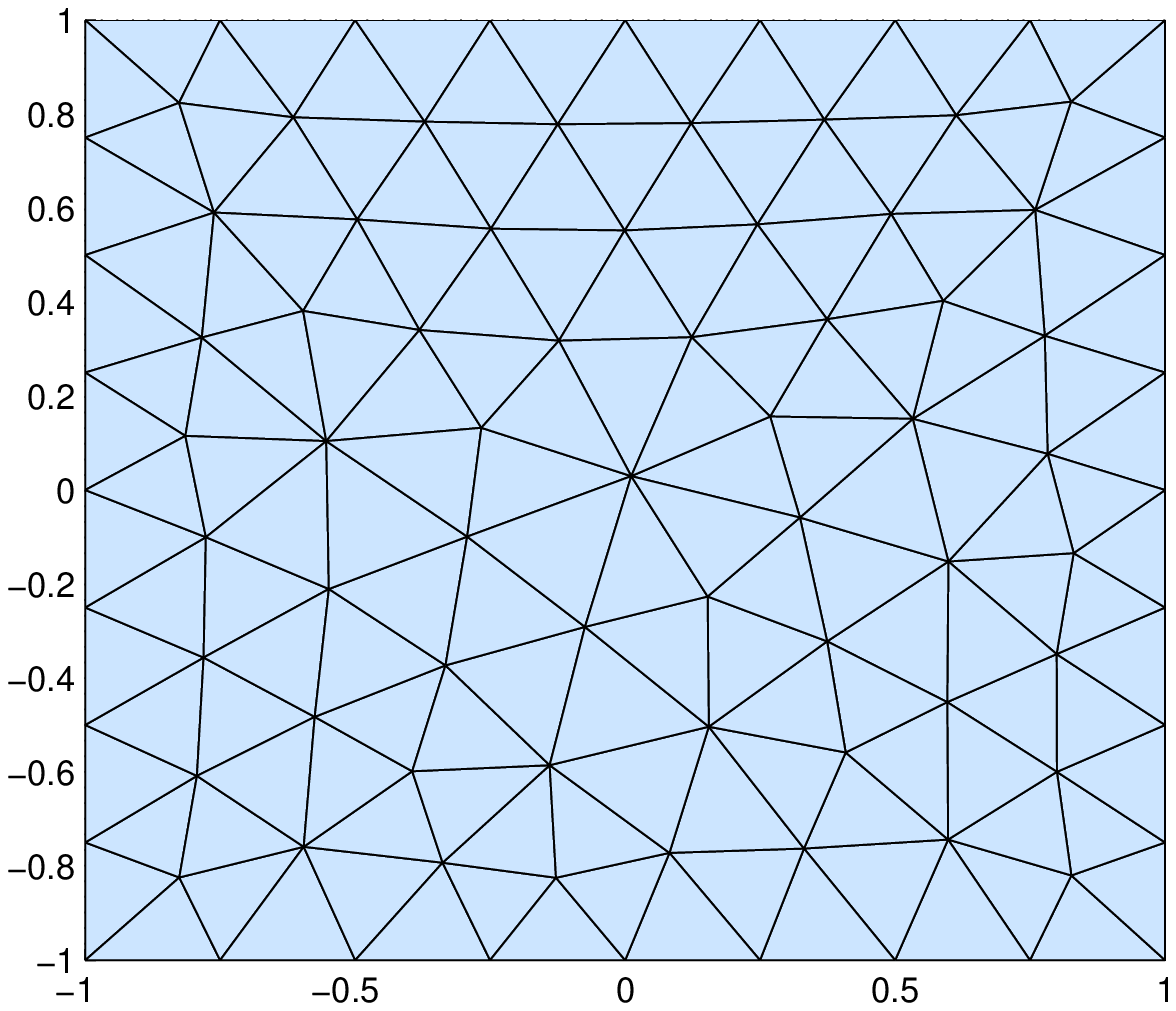}\\
\includegraphics[width=0.35\textwidth]{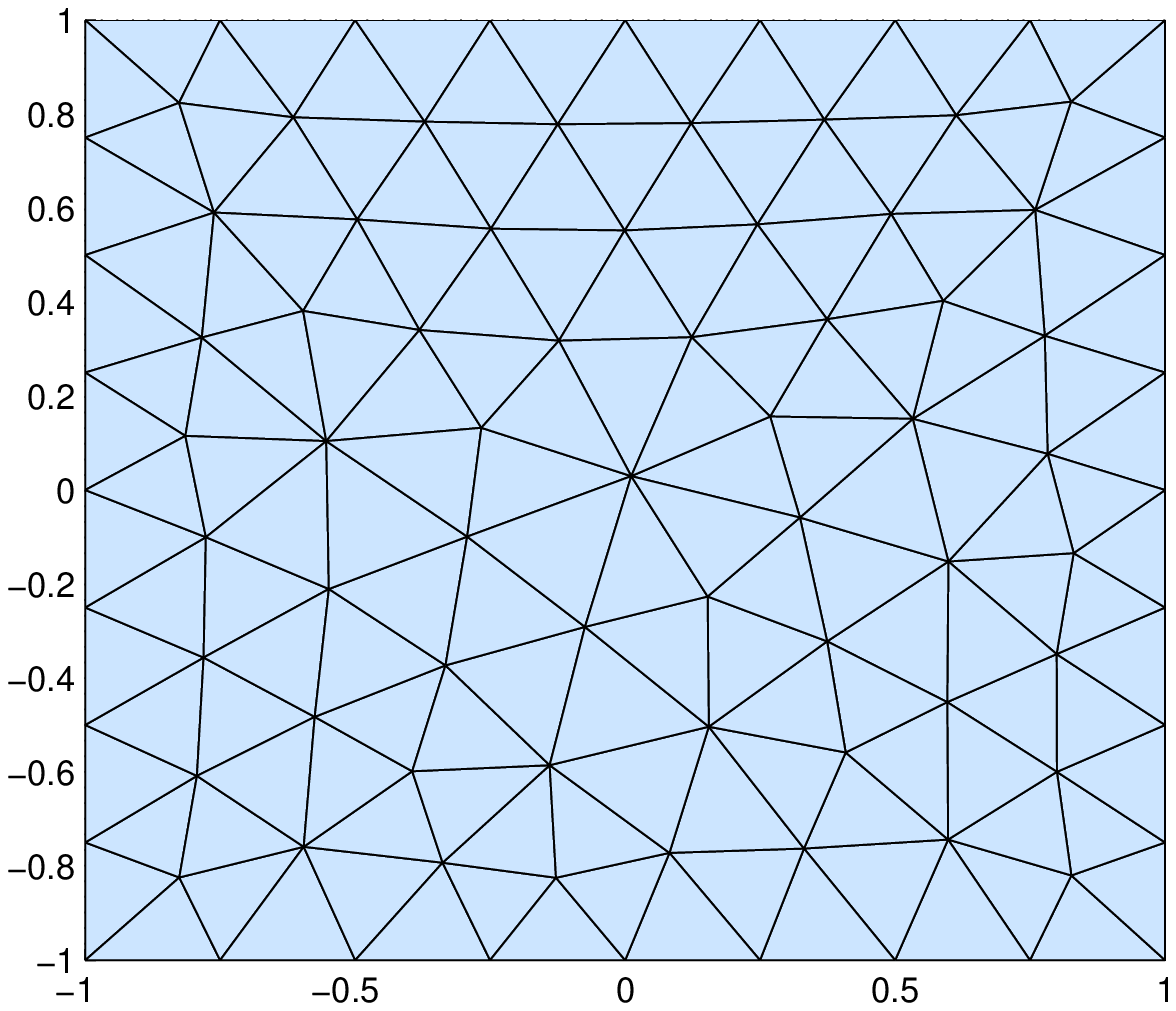}
\includegraphics[width=0.38\textwidth]{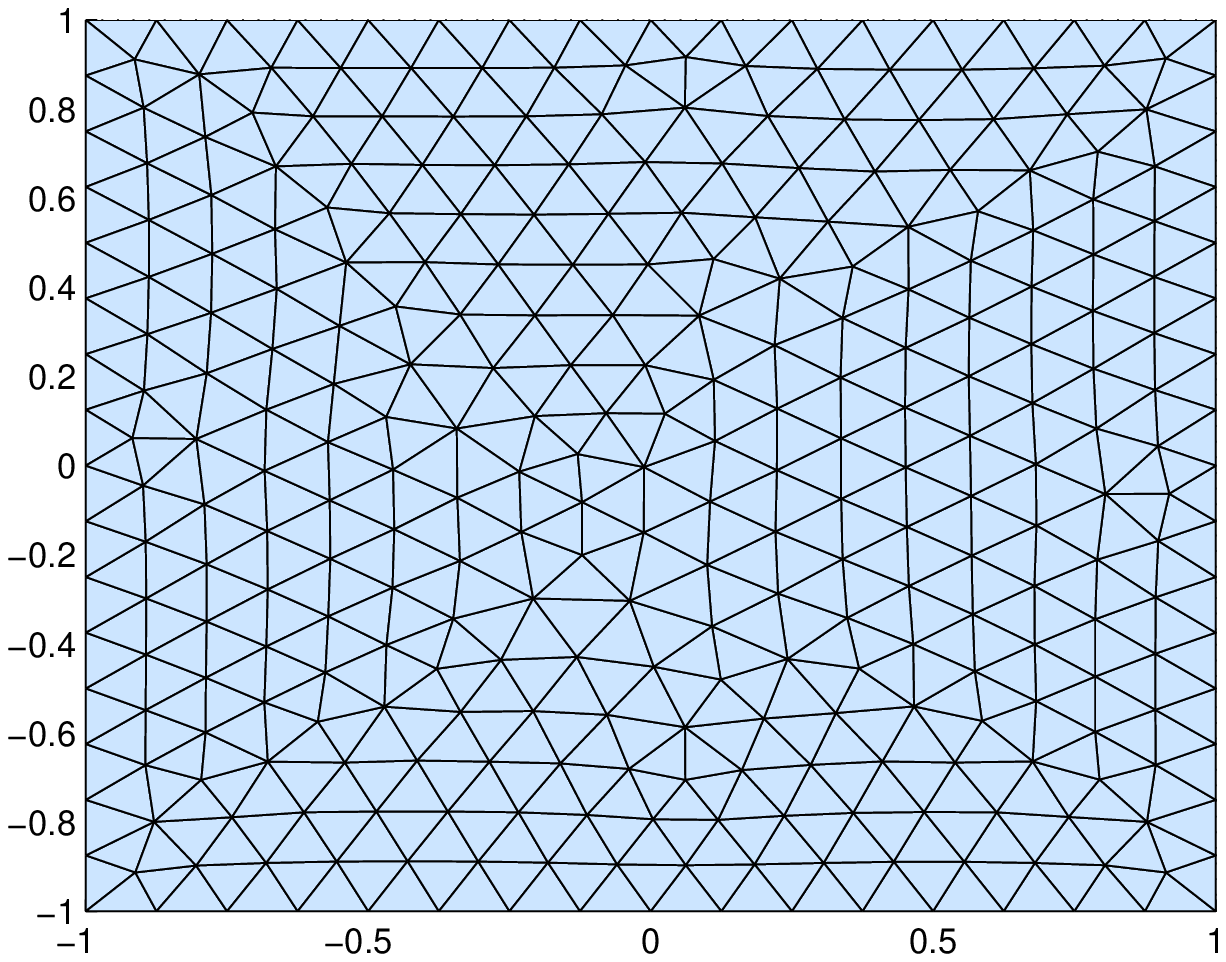}
\caption{Partial unconstructed meshes used in numerical examples }
\label{fig5.2}
\end{figure}

\noindent \textbf{Example 2} Now we expand our algorithm to fractional diffusion equations with both the left and right fractional derivatives.
We consider the model
\begin{equation}
\begin{split}
\frac{\partial u}{\partial t}& = d_+  {_{-1}}D_x^{\alpha}u(x,y,t) +  d_-  {_x}D_1^{\alpha}u(x,y,t) \\
& ~~~ + e_+  {_{-1}}D_y^{\beta}u(x,y,t) +  e_-  {_y}D_1^{\beta}u(x,y,t)+ f(x,y,t),
\end{split}
\end{equation}
with the initial condition
$$
u(x,y,0) = (x^2-1)^3(y^2-1)^3
$$
and boundary condition
$$
u(x,y,t)|_{\partial \Omega} = 0.
$$
Here, $(x,y)\in (-1,1) \times (-1,1)$, $\alpha, \beta \in(1,2)$ and $d_+=d_1=1$, $e_+=e_1=1$,
\begin{equation}
\begin{split}
f(x,y,t) &= -e^{-t}\bigg( (x^2-1)^3(y^2-1)^3 \\
&+ (_{-1}D_x^{\alpha-2}+{_x}D_1^{\alpha-2})\big(6(x^2-1)(5x^2-1)\big) (y^2-1)^3 \\
&+(_{-1}D_y^{\beta-2}+{_y}D_1^{\beta-2})\big(6(y^2-1)(5y^2-1)\big) (x^2-1)^3\bigg).
\end{split}
\nonumber
\end{equation}
The exact solution is $u(x,y,t) = e^{-t}(x^2-1)^3(y^2-1)^3$.

\begin{table}
\centering
\caption{Numerical errors ($L_2$) and order of convergence on unstructured meshes for Example 2. N denotes the order of polynomial in two variables, and K is the total number of triangle elements.}
    \begin{tabular}{*{9}{|c}|}
      \hline
      &K & \multicolumn{1}{c|}{32} & \multicolumn{2}{c|}{137} & \multicolumn{2}{c|}{378} & \multicolumn{2}{c|}{899} \\
    \hline
     N &$(\alpha,\beta)$ &  error  &  error & order &  error & order &  error & order   \\ \hline
     \multirow{4}{*}{1} & (1.01,1.01)  &1.54e-1 &3.45e-2 &2.16 &1.24e-2 &2.00 & 5.19e-3 &2.15 \\
                    \cline{2-9}
                    & (1.1,1.8) & 1.32e-1 &3.11e-2 &2.08 &1.18e-2 &1.91 &5.21e-3 &2.01 \\
                    \cline{2-9}
                    & (1.5,1.5) & 1.24e-1 &2.97e-2 &2.17 &1.09e-2 &1.96 &5.19e-3 &2.16 \\
                    \cline{2-9}
                    &(1.99,1.99)& 1.13e-1 &2.66e-2 &2.09 &9.69e-3 &1.98 &4.05e-3 &2.15\\
                    \hline
      &K & \multicolumn{1}{c|}{32} & \multicolumn{2}{c|}{137} & \multicolumn{2}{c|}{378} & \multicolumn{2}{c|}{899} \\\hline
     \multirow{4}{*}{2} & (1.01,1.01) &1.64e-2 &2.25e-3 &2.87 &5.04e-4 &2.93 &1.66e-4 &2.74 \\
                    \cline{2-9}
                    & (1.5,1.5) &1.32e-2 &1.62e-3 &3.03 &3.29e-3 &3.12 &9.96e-5 &2.94 \\
                    \cline{2-9}
                    & (1.8,1.1) &1.48e-2 &2.18e-3 &2.76 &4.69e-4 &3.00 &1.28e-4 &3.21 \\
                    \cline{2-9}
                    &(1.99,1.99)&1.44e-2 &1.64e-3 &3.13 &3.88e-4 &2.82 &1.10e-4 &3.10\\
                    \hline
      &K & \multicolumn{1}{c|}{32} & \multicolumn{2}{c|}{88} & \multicolumn{2}{c|}{137} & \multicolumn{2}{c|}{378} \\  \hline
     \multirow{4}{*}{3} & (1.01,1.01) &3.42e-3 &5.23e-4  &3.67 &2.01e-4 &5.25 &2.47e-5 &4.10\\
                    \cline{2-9}
                    & (1.5,1.5) &2.36e-3  &3.01e-4 &4.031 &1.10e-4 &5.53 &1.39e-5 &4.05\\
                    \cline{2-9}
                    & (1.2,1.6) &2.62e-3  &3.62e-4 &3.88 &1.46e-4 &4.98 &2.31-5 &3.61 \\
                    \cline{2-9}
                    &(1.99,1.99)&2.36e-4  &3.01e-4 &4.03 &1.10e-5 &5.52 &1.39e-5 &4.05\\
                    \hline
    \end{tabular}

    \label{table2}
\end{table}

In Table \ref{table1} and Table \ref{table2}, we note a convergence rate $\mathcal{O}(h^{N+1})$, which demonstrates that the central flux is optimal in two dimension.
Note that the small deviations from the optimal order are caused by the unstructured nature of the grid and the numerical quadrature for fractional integral operators.

\section{Conclusions}
In this paper, we propose  nodal discontinuous Galerkin methods for two dimensional \RL fractional equations on unstructured meshes, and offer a stability analysis and error estimates. Numerical experiments confirm that the optimal order of convergence is recovered. For the convenience of the theoretical analysis we have restricted our models to homogeneous boundary conditions, although this is not an essential restriction.

\section*{Acknowledgements}
This work was supported by NSFC 11271173, NSF DMS-1115416, and OSD/AFOSR FA9550-09-1-0613.

\appendix
\begin{center}
  {\bf APPENDIX}
\end{center}
For the right fractional integral, similarly we also have
\begin{equation}
\begin{split}
( _xD_b^{\alpha-2} p_h^x,\ell^k(\x))_{D^k} &\simeq  J^k \sum_{j=1}^{Q_D} {_x}D_b^{\alpha-2}p_h^x \ell^k(\x_j)w_j   \\
& = J^k \sum_{j=1}^{Q_D}\frac{1}{\Gamma(2-\alpha)}\bigg(\sum_{t\in B}\int_{x_{t-1}}^{x_t}(\xi - x_j)^{1-\alpha}
(p_h^x)^t(\xi,y_j)dx \\
&~~~ + \int_{x_{j}}^{x_{k-1}}(\xi - x_j)^{1-\alpha}(p_h^x)^k(\xi,y_j)dx\bigg) \ell^k(\x_j)w_j,
\end{split}
\end{equation}
where B is like A we introduce before, i.e., the index of triangles crossed by $PB$ in Figure \ref{fig1}. Then
\begin{equation}
\begin{split}
(_xD_b^{\alpha-2} p_h^x,\ell^k(\x))_{D^k}
& \simeq \frac{J^k}{\Gamma(2-\alpha)} \sum_{j=1}^{Q_D} \bigg[\sum_{m\in B}\bigg(\left(\frac{x_{m} - x_j}{2}\right)^{2-\alpha}\int_{-1}^{1}(1+\eta)^{1-\alpha}\\
&~~~ \cdot(p_h^x)^m\left(\frac{x_m+x_j}{2} + \frac{x_m-x_j}{2}\eta,y_j\right)d\eta \\
&~~~ - \left(\frac{x_m-x_{j-1}}{2}\right)^{2-\alpha}\int_{-1}^{1}(1+\eta)^{1-\alpha} \\
&~~~\cdot(p_h^x)^m\left(\frac{x_{m}+x_{j-1}}{2} + \frac{x_{m}-x_{j-1}}{2}\eta,y_j\right)d\eta \bigg)\\
&~~~ + \left(\frac{x_k-x_{j}}{2}\right)^{2-\alpha}\int_{-1}^{1}(1+\eta)^{1-\alpha} \\
&~~~ \cdot(p_h^x)^k\left(\frac{x_k+x_{j}}{2} + \frac{x_k-x_{j}}{2}\eta,y_j\right)d\eta ) \bigg]\ \ell^k(\x_j)w_j.
\end{split}
\end{equation}
Now the following is a piece of pseudocode for the right fractional stiffness matrix. 
\begin{algorithm}
\caption{Construction of the fractional global spatial stiffness matrix $_r\widetilde{S}_x$}
\begin{algorithmic}[1]
\State \% denote $\pmb{\ell}^k = [\ell_1^k, \ell_2^k, \cdots, \ell_{Np}^k]^T$
\State Initialize every block of $_r\widetilde{S}_x$ with zero matrix
\For{k = 1:K}
\For{j = 1:$Q_D$}
\State \% $Q_D$ is the total number of Gauss quadrature points
\State Find the set B and rXdir of every Gauss point $(x_j^k,y_j^k)$ on triangle $D^k$
\State \% rXdir is a length(B)$\times$2 matrix to store the intervals across by $PB$ on each triangle
\State $(_r\widetilde{S}_x) _{kk} =  (_r\widetilde{S}_x) _{kk} + (_xD_{x_{k-1}}^{\alpha-2}\pmb{\ell}^k(\x),\pmb{\ell}^k(\x))_{D^k}$
\For{t=1:length(B)}
\State $(_r\widetilde{S}_x) _{kt} =  (_r\widetilde{S}_x) _{kt} + (\frac{1}{\Gamma(2-\alpha)}\int_{x_{t-1}}^{x_t}(\xi-x)^{1-\alpha}\pmb{\ell}^t(\xi,y)d\xi,\pmb{\ell}^k(\x))_{D^k}$
\EndFor
\EndFor
\EndFor
\end{algorithmic}\label{alg2}
\end{algorithm}

\newpage

\end{document}